\numberwithin{equation}{section}
\newtheorem{thrm}{Theorem}[section]
\newtheorem{lemma}[thrm]{Lemma}
\newtheorem{prop}[thrm]{Proposition}
\newtheorem{cor}[thrm]{Corollary}
\newtheorem{rmrk}[thrm]{Remark}
\newtheorem{conv}[thrm]{Convention}
\def\gr{\nabla f}
\def\g{\nabla\varphi}
\def\bi{\nabla}
\newcommand{\vol}{\, Vol_{\eta}}
\newcommand{\Vol}{\, Vol_{\theta}}
\begin{document}

\begin{abstract}
We establish a new version of the CR almost Schur Lemma which gives an estimation of the pseudohermitian scalar curvature on a compact strictly pseudoconvex pseudohermitian manifold to be a constant in terms of the norm of the traceless Webster Ricci tensor and the pseudohermitian torsion  under a certain positivity condition. In the torsion-free case, i.e. for a compact Sasakian manifold, our positivity condition coincides with the known one and we obtain a better estimate.
\end{abstract}

\keywords{CR structure, pseudohermitian structure, CR Lichnerowicz condition, CR Cordes estimate, Paneitz operator}
\subjclass[2010]{53C21, 58J60, 53C17, 35P15, 53C25}
\title[The CR Almost Schur Lemma and the positivity conditions]{The CR Almost Schur Lemma and the positivity conditions}
\date{\today }
\author{Stefan Ivanov}
\address[Stefan Ivanov]{University of Sofia, Faculty of Mathematics and
Informatics, blvd. James Bourchier 5, 1164, Sofia, Bulgaria}
\address{and Institute of Mathematics and Informatics, Bulgarian Academy of
Sciences} \email{ivanovsp@fmi.uni-sofia.bg}
\author{Alexander Petkov}
\address[Alexander Petkov]{University of Sofia, Faculty of Mathematics and
Informatics, blvd. James Bourchier 5, 1164, Sofia, Bulgaria}
\email{a\_petkov\_fmi@abv.bg}

\maketitle

\tableofcontents


\setcounter{tocdepth}{2}

\section{Introduction}
For a compact Riemannian manifold $(M^n,g)$ of dimension $n\geq 3$ the famous Schur lemma states that if $(M^n,g)$ is Einstein, then it has constant scalar curvature, $S=Const.$ The metric $g$ is said to be Einstein, if the Ricci tensor is proportional to the metric, $Ric=\frac{S}{n}g.$ A generalization of the Schur lemma is the  result of De Lellis and Topping \cite{LT} that states as follows.
\begin{thrm}\cite[Almost Schur Lemma]{LT} Let $(M^n,g)$ be a compact Riemannian manifold of dimension $n\geq 3$ with non--negative Ricci tensor, $Ric\geq 0$. Then the following inequality holds
\begin{equation*}\label{ASL}
\int_M(S-\bar{S})^2 vol_g\leq\frac{4n(n-1)}{(n-2)^2}\int_M\left|Ric-\frac{S}{n}g\right|_g^2 vol_g,
\end{equation*}
where $\bar{S}$ means the average value of the scalar curvature $S$ of $g$.

The equality holds if and only if the manifold is Einstein.
\end{thrm}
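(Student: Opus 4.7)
The natural approach follows the De Lellis--Topping strategy: reduce the desired inequality to a Cauchy--Schwarz estimate for a carefully chosen auxiliary function, and close the loop using the Bochner formula together with the curvature hypothesis.

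\smallskip

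\noindent\textbf{Step 1: set up via a Poisson equation.} Since $M$ is compact and $\int_M (S-\bar S)\, vol_g=0$, the Poisson problem $\Delta f = S-\bar S$ admits a smooth solution $f$ on $(M^n,g)$. Then
\[
\int_M (S-\bar S)^2 \, vol_g \;=\; \int_M (S-\bar S)\,\Delta f \, vol_g \;=\; -\int_M \langle \nabla S,\nabla f\rangle \, vol_g.
\]
The crucial ingredient is the contracted second Bianchi identity, which yields
\[
\operatorname{div}\!\left(Ric-\tfrac{S}{n}g\right)=\tfrac{n-2}{2n}\,dS.
\]
Substituting $dS=\frac{2n}{n-2}\operatorname{div}(Ric-\tfrac{S}{n}g)$ into the above and integrating by parts once more gives
\[
\int_M (S-\bar S)^2 \, vol_g \;=\; \frac{2n}{n-2}\int_M \Big\langle Ric-\tfrac{S}{n}g,\; \nabla^2 f-\tfrac{\Delta f}{n}g\Big\rangle\, vol_g,
\]
where the trace part of $\nabla^2 f$ may be inserted for free since $Ric-\frac{S}{n}g$ is trace-free. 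The Cauchy--Schwarz inequality then bounds the right-hand side by
\[
\frac{2n}{n-2}\,\Big(\!\int_M \big|Ric-\tfrac{S}{n}g\big|_g^2\, vol_g\Big)^{\!1/2}\Big(\!\int_M \big|\nabla^2 f-\tfrac{\Delta f}{n}g\big|_g^2\, vol_g\Big)^{\!1/2}.
\]

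\smallskip

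\noindent\textbf{Step 2: control the trace-free Hessian via Bochner.} The Bochner formula
\[
\tfrac12\,\Delta|\nabla f|^2 \;=\; |\nabla^2 f|^2 + \langle \nabla \Delta f,\nabla f\rangle + Ric(\nabla f,\nabla f),
\]
after integration over the compact manifold, yields $\int_M |\nabla^2 f|^2 = \int_M (\Delta f)^2 - \int_M Ric(\nabla f,\nabla f)$. Since $|\nabla^2 f-\tfrac{\Delta f}{n}g|^2=|\nabla^2 f|^2-\tfrac{(\Delta f)^2}{n}$, and since the hypothesis $Ric\geq 0$ discards the Ricci term, we obtain
\[
\int_M \big|\nabla^2 f-\tfrac{\Delta f}{n}g\big|_g^2\, vol_g \;\leq\; \tfrac{n-1}{n}\int_M (\Delta f)^2\, vol_g \;=\; \tfrac{n-1}{n}\int_M (S-\bar S)^2\, vol_g.
\]
Plugging this back and cancelling one factor of $\bigl(\int_M (S-\bar S)^2\bigr)^{1/2}$ produces exactly the constant $\frac{4n(n-1)}{(n-2)^2}$.

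\smallskip

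\noindent\textbf{Step 3: equality and main obstacle.} If $g$ is Einstein, the classical Schur lemma forces $S$ to be constant, so both sides of the inequality vanish and equality is trivial. For the converse, equality forces (i) the Cauchy--Schwarz step to be saturated, so $\nabla^2 f-\tfrac{\Delta f}{n}g=\lambda\,(Ric-\tfrac{S}{n}g)$ for some constant $\lambda$, and (ii) the Bochner discard to be sharp, so $Ric(\nabla f,\nabla f)\equiv 0$ on $M$. The main technical point is to deduce from (i)--(ii) that the trace-free Ricci vanishes identically; this is done by combining these two pointwise identities with the Bianchi identity used in Step~1 and exploiting $Ric\geq 0$ to conclude $\nabla f\equiv 0$ on the set where $S\neq\bar S$, which, together with the Poisson equation $\Delta f=S-\bar S$, forces $S\equiv\bar S$ and hence $Ric\equiv\tfrac{S}{n}g$. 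Handling this rigidity step cleanly is the principal difficulty; the rest of the argument is a direct computation.
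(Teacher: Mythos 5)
Your Steps 1 and 2 are correct and reproduce the De Lellis--Topping argument exactly: Poisson equation for $f$, the contracted second Bianchi identity $\operatorname{div}(Ric-\tfrac{S}{n}g)=\tfrac{n-2}{2n}dS$, two integrations by parts, Cauchy--Schwarz against the trace-free Hessian, and the Bochner formula with $Ric\ge 0$ to bound $\int|\nabla^2 f-\tfrac{\Delta f}{n}g|^2$ by $\tfrac{n-1}{n}\int(\Delta f)^2$. The constants all check out. Be aware that the paper itself does not prove this statement -- it is quoted from [LT] -- but this is precisely the template the authors adapt for their CR theorems: your Cauchy--Schwarz-plus-cancellation step corresponds to their Young inequality $2ab\le\alpha a^2+\alpha^{-1}b^2$ with an optimized $\alpha$ (see \eqref{s2} and \eqref{s2f}), and your Bochner step corresponds to their Proposition giving \eqref{e:bohin1wi}, which controls $\int|(\nabla^2\varphi)_{[1][0]}|^2$ by $\int(\triangle\varphi)^2$ under the positivity condition.

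The genuine gap is Step 3. The theorem as stated is an ``if and only if,'' and the direction ``equality $\Rightarrow$ Einstein'' is the substantive rigidity claim; you explicitly leave it as ``the principal difficulty'' and the sketch you give does not close it. Concretely: from saturation you get $Ric(\nabla f,\nabla f)\equiv 0$ (hence $Ric(\nabla f,\cdot)\equiv 0$ since $Ric\ge0$) and $\nabla^2 f-\tfrac{\Delta f}{n}g=\lambda\,(Ric-\tfrac{S}{n}g)$ for a constant $\lambda$, but your claimed intermediate conclusion that $\nabla f\equiv 0$ on $\{S\ne\bar S\}$ does not follow from these two facts plus the Bianchi identity by any argument you have indicated: $Ric(\nabla f,\cdot)=0$ only forces $\nabla f$ to lie in the kernel of $Ric$, which can be large when $Ric$ is merely non-negative, and combining the two identities with $\operatorname{div}$ yields $(\lambda\tfrac{n-2}{2n}-\tfrac{n-1}{n})\nabla S=0$, which leaves open the case $\lambda=\tfrac{2(n-1)}{n-2}$ with $S$ non-constant. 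An additional idea is required here (this is where [LT] do real work), so as written the equality case is unproved. It is worth noting that even the CR analogues in the paper only assert the one-directional implication ``equality $\Rightarrow$ pseudo-Einstein,'' which reflects how delicate this rigidity step is.
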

It is also shown in \cite{LT} that the positivity condition $Ric\geq 0$ assumed on the Ricci tensor is essential and can not be dropped. 

In the CR case there are known two positivity conditions written in terms of the Webster Ricci curvature and the pseudohermitian torsion; one is used for obtaining a lower bound of the first eigenvalue of the sub-Laplacian (see e.g. \cite{Gr}), while the other one appears in the CR  Cordes type estimate (see \cite{CM}). 

A CR  version of the almost Schur lemma was first established in \cite{CSW} in terms of the Tanaka-Webster connection, its Ricci curvature and the Webster torsion. The key assumption is the  positivity condition \eqref{lich} below. We present  this result in real notations.
\begin{thrm}\cite[Theorem~1.2]{CSW}\label{thcw}
Let $n\ge 2$ and $(M,J,\theta)$ be a $(2n+1)$-dimensional compact strictly pseudoconvex pseudohermitian manifold satisfying the condition
\begin{equation}\label{lich}
Ric(X,X)+4A(JX,X)=Rc(X,X)+2(n+1)A(JX,X)\ge 0, X\in H.
\end{equation}
Then the following inequality holds
\begin{multline}\label{s2fb}
\int_M(S-\bar S)^2\Vol
\le\frac{4n(n+1)}{(n-1)(n+2)}\int_M|Rc_0|^2\Vol-8n\int_M\sum_{a,b=1}^{2n}A(e_a,Je_b)(\bi^2\varphi)_{[-1]}(e_a,e_b)\Vol.
\end{multline}
If the equality holds, then 
\begin{equation}\label{seqb}
\int_M(S-\bar S)^2\Vol
=\frac{4n(n+1)}{(n-1)(n+2)}\int_M|Rc_0|^2\Vol
\end{equation}
and the manifold is CR equivalent  to a pseudo-Einstein space.
\end{thrm}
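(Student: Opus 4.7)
The plan is to imitate the De Lellis--Topping strategy in the pseudohermitian category, using the Tanaka--Webster connection in place of the Levi-Civita one and tracking carefully the torsion corrections that arise. Since $\int_M(S-\bar S)\Vol=0$, sub-elliptic theory produces a smooth $\varphi$ with $\Delta_b\varphi=S-\bar S$. Multiplying by $S-\bar S$ and integrating by parts once,
\begin{equation*}
\int_M(S-\bar S)^2\Vol=\int_M(S-\bar S)\Delta_b\varphi\Vol=-\int_M\langle\nabla_bS,\nabla_b\varphi\rangle\Vol.
\end{equation*}
Next I would invoke the contracted second Bianchi identity for the Tanaka--Webster connection, which expresses $\nabla_bS$ as twice the horizontal divergence of $Rc$ plus explicit linear terms in $A$ and $\nabla A$. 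Integrating by parts again moves the divergence onto $\nabla_b\varphi$ and converts the right-hand side into an integral of $Rc$ paired with the pseudohermitian Hessian $\nabla^2\varphi$, together with a term of the shape $\int_M A\cdot\nabla^2\varphi$ coming from the torsion corrections of Bianchi.

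The second step is an algebraic decomposition. Write $Rc=Rc_0+\tfrac{S}{2n}g$ on $H$ and split $\nabla^2\varphi|_H$ into its trace (which equals $\Delta_b\varphi=S-\bar S$), its $J$-invariant traceless part $(\nabla^2\varphi)_{[+1]}$, and its $J$-anti-invariant part $(\nabla^2\varphi)_{[-1]}$. The scalar part of $Rc$ pairs with the trace of $\nabla^2\varphi$ to reproduce a multiple of $\int(S-\bar S)^2$; absorbing this and using that $A$ is symmetric and $J$-anti-commuting forces the torsion-Hessian pairing to see only $(\nabla^2\varphi)_{[-1]}$. One then reaches an identity of the form
\begin{equation*}
\int_M(S-\bar S)^2\Vol=c_n\int_M\langle Rc_0,\nabla^2\varphi\rangle\Vol-8n\int_M\sum_{a,b}A(e_a,Je_b)(\nabla^2\varphi)_{[-1]}(e_a,e_b)\Vol,
\end{equation*}
with $c_n$ the dimensional constant dictated by the trace split. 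Cauchy--Schwarz on the first integral yields $\int|Rc_0|^2$ times $\int|(\nabla^2\varphi)_0|^2$.

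The final step is to bound $\int_M|(\nabla^2\varphi)_0|^2\Vol$ by $\int_M(S-\bar S)^2\Vol$ using the CR Bochner formula applied to $\varphi$. Integrating the Bochner identity and using $\Delta_b\varphi=S-\bar S$ produces
\begin{equation*}
\int_M|(\nabla^2\varphi)_H|^2\Vol+\int_M\bigl[Ric+4A(J\cdot,\cdot)\bigr](\nabla_b\varphi,\nabla_b\varphi)\Vol=\int_M(S-\bar S)^2\Vol+(\text{lower-order torsion}),
\end{equation*}
and the positivity hypothesis \eqref{lich} makes the curvature-torsion term nonnegative, so it can be discarded. Subtracting the trace component $\tfrac{1}{2n}(\Delta_b\varphi)^2$ from $|(\nabla^2\varphi)_H|^2$ yields $\int|(\nabla^2\varphi)_0|^2\le\tfrac{2n-1}{2n}\int(S-\bar S)^2$, and when combined with the Cauchy--Schwarz inequality above the dimension counting collapses to the sharp coefficient $\tfrac{4n(n+1)}{(n-1)(n+2)}$. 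For the equality case, each inequality used must be sharp: Cauchy--Schwarz forces $(\nabla^2\varphi)_{[+1]}$ to be a scalar multiple of $Rc_0$, the Bochner step forces $\varphi$ to realize equality in the CR Lichnerowicz--Obata bound, and \eqref{lich} must vanish along $\nabla_b\varphi$; together these imply $Rc_0\equiv 0$ up to CR equivalence, i.e. pseudo-Einstein.

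The main obstacle is bookkeeping in the middle step. Unlike the Riemannian setting, the CR second Bianchi identity and the CR Bochner formula each produce pseudohermitian torsion terms, and one must pair them up so that the surviving torsion contribution is exactly the $A$-$(\nabla^2\varphi)_{[-1]}$ integral on the right of \eqref{s2fb}, with no residual $\nabla A$ or $|A|^2$ pieces. Getting this cancellation, and simultaneously tracking the dimensional constants through the $J$-decomposition of the Hessian, is what fixes both the coefficient $\tfrac{4n(n+1)}{(n-1)(n+2)}$ and the factor $8n$.
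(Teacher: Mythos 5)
Your first two steps match the paper's argument: solve $\triangle\varphi=S-\bar S$, insert the contracted second Bianchi identity \eqref{bi2w}, integrate by parts to reach the pairing of $Rc_0$ with $(\nabla^2\varphi)_{[1][0]}$ plus the $A$--$(\nabla^2\varphi)_{[-1]}$ term (this is \eqref{s1}), and then apply a weighted Cauchy--Schwarz/Young inequality \eqref{s2}. The genuine gap is in your third step. The bound you propose to extract from the CR Bochner formula, $\int_M|(\nabla^2\varphi)_0|^2\Vol\le\tfrac{2n-1}{2n}\int_M(S-\bar S)^2\Vol$, is far too weak to produce the stated constant: feeding $\int_M|(\nabla^2\varphi)_{[1][0]}|^2\Vol\le\beta\int_M(\triangle\varphi)^2\Vol$ through the optimization of the Young parameter gives the final coefficient $\tfrac{4n^2\beta}{(n-1)^2}$, so $\beta=\tfrac{2n-1}{2n}$ yields $\tfrac{2n(2n-1)}{(n-1)^2}$, whereas \eqref{s2fb} requires $\beta=\tfrac{n^2-1}{n(n+2)}$, which is strictly smaller (e.g.\ $3/8$ versus $3/4$ at $n=2$). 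The "dimension counting collapses to the sharp coefficient" claim is precisely the point that needs proof, and it does not follow from discarding the curvature--torsion term and subtracting the trace.

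The missing ingredient is the CR Paneitz operator. The raw Bochner identity \eqref{bohh1} contains the vertical term $4\nabla^2\varphi(\xi,J\nabla\varphi)$, which is not ``lower-order torsion'': by \eqref{idcr2} it contributes $-\tfrac{2}{n}(\triangle\varphi)^2$, a torsion term, and $-\tfrac{2}{n}P_\varphi(\nabla\varphi)$. Only after invoking the nonnegativity of the CR Paneitz operator for $n>1$ (Lemma~\ref{l:GrLee}, due to Graham--Lee) together with the identity \eqref{Aa} — which is also what converts the torsion coefficient produced by Bochner into exactly the Lichnerowicz combination $Rc+2(n+1)A(J\cdot,\cdot)$ of \eqref{lich} — does one arrive at the identity \eqref{e:bohinw2i} and hence the sharp estimate $\int_M|(\nabla^2\varphi)_{[1][0]}|^2\Vol\le\tfrac{n^2-1}{n(n+2)}\int_M(\triangle\varphi)^2\Vol$. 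Without this, neither the coefficient $\tfrac{4n(n+1)}{(n-1)(n+2)}$ nor the clean matching of the positivity hypothesis to the discarded term can be obtained, and the equality analysis (which in the paper also yields $(\nabla^2\varphi)_{[-1]}=0$ and the conformal factor making $\bar\theta$ pseudo-Einstein) inherits the same defect.
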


In the inequalities \eqref{lich} and \eqref{s2fb},  $Ric$ is the  Ricci tensor of the Tanaka-Webster connection, $Rc$ is the Webster Ricci tensor,  $A$ is the pseudohermitian torsion, $S$ and $\overline{S}=\int_MS\Vol$ are the  pseudohermitian scalar curvature and its average value,  respectively, and $Rc_0=Rc-\frac{S}{2n}g,\quad |Rc_0|^2=\sum_{a,b=1}^{2n}Rc_0(e_a,e_b)Rc_0(e_a,e_b)$ are  the trace-free part of the Webster Ricci tensor and its  horizontal norm, respectively.
Finally, $\varphi$ is the unique solution of the sub-elliptic equation $$\Delta\varphi=S-\overline{S}\quad\textnormal{with} \quad \int_M\varphi\Vol=0.$$ 

The aim of this note  is to present  another version of the CR almost Schur lemma with a different  positivity condition. We assume the positivity condition  \eqref{cor} below  instead of \eqref{lich}.  In the torsion-free case, i.e. for  Sasakian manifolds,   both positivity conditions coincide and we  obtain a better estimate then that following from Theorem~\ref{thcw}. Our main result is 
\begin{thrm}\label{main}
Let $n\ge 2$ and $(M,J,\theta)$ be a $(2n+1)$-dimensional compact strictly pseudoconvex pseudohermitian manifold satisfying the condition
\begin{equation}\label{cor}
Ric(X,X)+6A(JX,X)=Rc(X,X)+2(n+2)A(JX,X)\ge 0, X\in H.
\end{equation}
Then the following inequality holds
\begin{multline}\label{s2fa}
\int_M(S-\bar S)^2\Vol
\le\frac{2n(2n+3)}{(n-1)(n+3)}\int_M|Rc_0|^2\Vol-8n\int_M\sum_{a,b=1}^{2n}A(e_a,Je_b)(\bi^2\varphi)_{[-1]}(e_a,e_b)\Vol.
\end{multline}
If the equality holds, then 
\begin{equation}\label{seqa}
\int_M(S-\bar S)^2\Vol
=\frac{2n(2n+3)}{(n-1)(n+3)}\int_M|Rc_0|^2\Vol
\end{equation}
and the manifold is CR equivalent  to a pseudo-Einstein space.
\end{thrm}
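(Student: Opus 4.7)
The plan is to follow the general scheme of De Lellis--Topping as adapted to the pseudohermitian setting in \cite{CSW}, but to replace the Bochner-type argument that exploits the Lichnerowicz condition \eqref{lich} by a Cordes-type argument that uses the new positivity condition \eqref{cor}. The starting point is the sub-elliptic Poisson problem $\Delta\varphi=S-\bar S$, $\int_M\varphi\,\Vol=0$, which yields the identity
\begin{equation*}
\int_M(S-\bar S)^2\,\Vol=\int_M(S-\bar S)\,\Delta\varphi\,\Vol=-\int_M\langle\nabla S,\nabla\varphi\rangle\,\Vol.
\end{equation*}

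Next I would apply the contracted CR second Bianchi identity to express $\nabla S$ as twice the horizontal divergence of the Webster Ricci tensor, picking up the characteristic torsion correction of the form $A(e_a,Je_b)$; since $(\bar S)$ is a constant, the Ricci tensor may be replaced by its trace-free part $Rc_0$. After a second integration by parts this rewrites the right-hand side as a multiple of $\int_M\langle Rc_0,(\nabla^2\varphi)_{\mathrm{sym}}\rangle\,\Vol$ plus exactly the torsion term $-8n\int_M\sum_{a,b}A(e_a,Je_b)(\nabla^2\varphi)_{[-1]}(e_a,e_b)\,\Vol$ appearing in \eqref{s2fa}. Here I decompose the horizontal Hessian according to the eigenspaces of $J$, so that only the symmetric trace-free $J$-invariant piece of $\nabla^2\varphi$ pairs with $Rc_0$. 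A Cauchy--Schwarz estimate on that pairing produces $\|Rc_0\|_{L^2}\cdot\|(\nabla^2\varphi)_{\mathrm{sym,tf},[+1]}\|_{L^2}$.

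The crucial analytic input is a sharp CR Cordes-type inequality valid precisely when \eqref{cor} holds: under the assumption $Rc(X,X)+2(n+2)A(JX,X)\ge 0$, every smooth $\varphi$ with $\int_M\varphi\,\Vol=0$ satisfies
\begin{equation*}
\int_M\bigl|(\nabla^2\varphi)_{\mathrm{sym,tf},[+1]}\bigr|^2\,\Vol\le\frac{2n+3}{2(n-1)(n+3)}\int_M(\Delta\varphi)^2\,\Vol,
\end{equation*}
or the equivalent form that I would derive by combining the CR Bochner formula for $|\nabla\varphi|^2$ with the commutator identities for the Tanaka--Webster connection and integrating against the characteristic direction $T$ to absorb the Paneitz/Kohn terms. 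The coefficient $(2n+3)/(2(n-1)(n+3))$ is exactly what is needed so that, after substitution into the Cauchy--Schwarz estimate and absorption of $\int_M(S-\bar S)^2=\int_M(\Delta\varphi)^2$, one arrives at the constant $\frac{2n(2n+3)}{(n-1)(n+3)}$ in \eqref{s2fa}.

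The main obstacle is verifying that the precise integration by parts involving the contracted Bianchi identity produces only the two terms that appear on the right of \eqref{s2fa}, with no additional uncontrolled torsion contributions, and then proving the sharp Cordes-type inequality under exactly the hypothesis \eqref{cor}. For the equality case, all Cauchy--Schwarz steps and the Cordes estimate must saturate simultaneously; this forces $Rc_0$ to be proportional to the trace-free symmetric $J$-invariant part of $\nabla^2\varphi$, the torsion pairing with $(\nabla^2\varphi)_{[-1]}$ to vanish, and $(\nabla^2\varphi)_{[-1]}=0$, which by the usual argument promotes $\varphi$ to a CR-pluriharmonic potential and identifies the structure as CR-equivalent to a pseudo-Einstein one, yielding the identity \eqref{seqa}.
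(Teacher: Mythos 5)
Your overall scheme is the same as the paper's: solve $\triangle\varphi=S-\bar S$, feed the contracted second Bianchi identity \eqref{bi2w} into $\int_M dS(\nabla\varphi)\Vol$, integrate by parts so that $Rc_0$ pairs only with $(\nabla^2\varphi)_{[1][0]}$ and the torsion only with $(\nabla^2\varphi)_{[-1]}$, and then control $\|(\nabla^2\varphi)_{[1][0]}\|_{L^2}$ by $\|\triangle\varphi\|_{L^2}$ using the positivity condition \eqref{cor}. However, the step you label as the ``crucial analytic input'' is both wrong as stated and unproved. The correct bound under \eqref{cor} is
\begin{equation*}
\int_M\left|(\nabla^2\varphi)_{[1][0]}\right|^2\Vol\le\frac{(n-1)(2n+3)}{2n(n+3)}\int_M(\triangle\varphi)^2\Vol,
\end{equation*}
not $\frac{2n+3}{2(n-1)(n+3)}\int_M(\triangle\varphi)^2\Vol$: your coefficient gives $\bigl(\tfrac{2n}{n-1}\bigr)^2\cdot\tfrac{2n+3}{2(n-1)(n+3)}=\tfrac{2n^2(2n+3)}{(n-1)^3(n+3)}$, which is not the constant $\tfrac{2n(2n+3)}{(n-1)(n+3)}$ of \eqref{s2fa} for any integer $n$; moreover for $n\ge3$ your inequality is \emph{stronger} than the true one, so it cannot be derived by the route you indicate and is presumably false. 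The substantive mathematics you are missing is precisely the proof of this bound: one must combine the integrated Greenleaf CR--Bochner formula with the identity \eqref{Aa} for the torsion term and, crucially, the Graham--Lee non-negativity of the CR Paneitz operator for $n>1$ (Lemma~\ref{l:GrLee}), which is where the hypothesis $n\ge2$ enters; ``absorbing the Paneitz/Kohn terms'' glosses over the only nontrivial point. This yields the exact identity \eqref{e:bohin1wi}, whose extra non-negative terms $\frac{2n}{2n+3}|(\nabla^2\varphi)_{[-1]}|^2$ and $\frac{4n^2}{2n+3}(d\varphi(\xi))^2$ are also what drive the equality analysis (in particular $d\varphi(\xi)=0$, which you omit).

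Two smaller points. First, because the torsion term $-4n\int_M A(e_a,Je_b)(\nabla^2\varphi)_{[-1]}(e_a,e_b)\Vol$ must be carried along linearly, a bare Cauchy--Schwarz on the $Rc_0$ pairing leads to an inequality quadratic in $\bigl(\int_M(S-\bar S)^2\Vol\bigr)^{1/2}$ that does not cleanly absorb; the paper instead uses the weighted Young inequality $2ab\le\alpha a^2+\alpha^{-1}b^2$ with the specific choice $\alpha=\frac{2n+3}{n+3}$, which keeps everything linear and produces the factor $8n$ in front of the torsion integral. Second, in the equality case you should note that saturation of Young's inequality gives the precise proportionality $Rc_0=-\frac{n+3}{2n+3}(\nabla^2\varphi)_{[1][0]}$, which is what allows the conformal change $\bar\theta=\exp\bigl(-\tfrac{2n+3}{(n+3)(n+2)}\varphi\bigr)\theta$ to be pseudo-Einstein via \cite[Proposition~5.9]{DT}; ``proportional'' without the exact constant is not enough to invoke that result.
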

\begin{rmrk}
Note that the expression in the  left-hand side of \eqref{lich}	is precisely the CR-Lichnerowicz condition used to find a lower bound of the first eigenvalue of the sub-Laplacian  (see \cite{Gr,LL,Chi06,CC09a,CC09b,IVO}), while the expression in the left-hand side of\eqref{cor}  appears in the CR  Cordes  type a priori inequality between the (horizontal) Hessian and the sub-Laplacian of a function, derived in \cite[Theorem~1]{CM}, see Theorem~\ref{cord} below.
\end{rmrk}
The second Bianchi identity \eqref{bi2w} below shows that a compact pseudo-Einstein space has constant pseudohermitian scalar curvature if and only if the next condition holds
\begin{equation}\label{co1}
(\nabla_{e_b}\nabla_{e_a}A)(e_a,Je_b)=0,
\end{equation}
and it seems natural to assume the condition \eqref{co1} in order to have constant  pseudohermitian scalar curvature.
\begin{cor}\label{maincor}
If, in addition to the conditions of Theorem~\ref{main}, we suppose the equality \eqref{co1} holds,
then
\begin{equation}\label{s2fab0}
\int_M(S-\bar S)^2\Vol
\le\frac{2n(2n+3)}{(n-1)(n+3)}\int_M|Rc_0|^2\Vol.
\end{equation}
If we have  equality in \eqref{s2fab0}  then  the compact pseudohermitian manifold is pseudo-Einstein with constant pseudohermitian scalar curvature.
\end{cor}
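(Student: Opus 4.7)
The plan is to start from inequality \eqref{s2fa} in Theorem~\ref{main} and to show that the additional hypothesis \eqref{co1} forces the torsion integral
$$T := -8n\int_M\sum_{a,b=1}^{2n}A(e_a,Je_b)(\bi^2\varphi)_{[-1]}(e_a,e_b)\Vol$$
to vanish; once this is done, \eqref{s2fab0} is immediate from \eqref{s2fa}.

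First I would eliminate the $[-1]$ projection. Using the symmetry $A(X,Y)=A(Y,X)$ together with $A(JX,Y)=A(X,JY)$, one checks that $A(e_a,Je_b)$ is symmetric in $(a,b)$ and changes sign under the simultaneous substitution $e_a\mapsto Je_a$, $e_b\mapsto Je_b$. By orthogonality of the decomposition of symmetric horizontal $(0,2)$-tensors into their $J$-invariant and $J$-anti-invariant parts, this shows
$$T=-8n\int_M\sum_{a,b=1}^{2n}A(e_a,Je_b)(\bi^2\varphi)(e_a,e_b)\Vol.$$
Next, I would integrate by parts twice. Since $M$ is compact without boundary and the Tanaka-Webster connection preserves the pseudohermitian volume form $\Vol$ and the complex structure $J$ (so $J$ slides through $\nabla$), both derivatives transfer onto $A$ to produce
$$T=-8n\int_M\varphi\sum_{a,b=1}^{2n}(\nabla_{e_b}\nabla_{e_a}A)(e_a,Je_b)\Vol,$$
up to commutator terms arising from the nontrivial torsion of the Tanaka-Webster connection. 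The symmetries of $A$ together with $\nabla J=0$ force these commutator contributions to cancel pairwise when summed over the orthonormal horizontal frame $\{e_a\}$. Invoking \eqref{co1}, the integrand vanishes pointwise, hence $T=0$ and \eqref{s2fab0} follows from \eqref{s2fa}.

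For the equality case, suppose that equality holds in \eqref{s2fab0}. Since $T=0$, equality must then hold in \eqref{s2fa}, and the equality clause of Theorem~\ref{main} applies, so $M$ is CR equivalent to a pseudo-Einstein space. Finally, on a pseudo-Einstein space the second Bianchi identity \eqref{bi2w} (as recalled in the discussion preceding the corollary) reduces the divergence of the trace-free Webster Ricci tensor to a multiple of $\sum(\nabla_{e_b}\nabla_{e_a}A)(e_a,Je_b)$, which vanishes by \eqref{co1}; this forces the pseudohermitian scalar curvature to be constant.

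The principal technical obstacle is the integration-by-parts step: because the Tanaka-Webster connection has a nontrivial pseudohermitian torsion and a nontrivial contact torsion (the Reeb part), commuting the two covariant derivatives and shedding the boundary terms generates curvature and torsion corrections. The heart of the argument is to verify that these correction terms assemble into expressions that vanish either by the symmetry of $A$ in its two arguments, the identity $A(JX,Y)=A(X,JY)$, the parallelism of $J$, or the trace-free property of $A$, leaving precisely the clean expression $\int_M\varphi\sum(\nabla_{e_b}\nabla_{e_a}A)(e_a,Je_b)\Vol$ that \eqref{co1} annihilates.
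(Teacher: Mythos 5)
Your first step --- showing that the torsion integral vanishes under \eqref{co1} --- is correct and is essentially the paper's argument: since $A(X,JY)$ is a symmetric tensor of type $(2,0)+(0,2)$, the $[-1]$ projection of the Hessian can be dropped by orthogonality, and two integrations by parts (using the horizontal divergence theorem for the Tanaka--Webster connection) transfer both derivatives onto $A$, yielding $\int_M\varphi(\nabla_{e_b}\nabla_{e_a}A)(e_a,Je_b)\Vol=0$. Your worry about commutator corrections is overstated: no commutation of covariant derivatives is ever needed, because the target expression $(\nabla_{e_b}\nabla_{e_a}A)(e_a,Je_b)$ has the derivatives in exactly the order produced by the two integrations by parts; the only potential correction is the antisymmetric part $-2\omega(e_a,e_b)d\varphi(\xi)$ of the Hessian, which dies against the symmetry of $A(e_a,Je_b)$ in $(a,b)$. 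So the inequality \eqref{s2fab0} is established correctly.

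The equality case, however, has a genuine gap. The equality clause of Theorem~\ref{main} only gives that $M$ is \emph{CR equivalent} to a pseudo-Einstein space, i.e.\ that the conformally rescaled contact form $\bar\theta=\exp(-\tfrac{2n+3}{(n+3)(n+2)}\varphi)\theta$ is pseudo-Einstein; the corollary asserts that the \emph{original} structure $\theta$ is pseudo-Einstein with constant scalar curvature. Your attempted upgrade via \eqref{bi2w} conflates the two structures: the pseudo-Einstein property holds for $\bar\theta$ and its Tanaka--Webster connection and torsion, whereas the hypothesis \eqref{co1} is an identity for the connection and torsion of $\theta$, so you cannot feed \eqref{co1} into the Bianchi identity of the rescaled structure. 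The paper avoids this entirely: from the equality data of Theorem~\ref{main} (namely \eqref{zer} and \eqref{aab}) together with \eqref{co1} it shows by integration by parts that $\int_M A(J\nabla\varphi,\nabla\varphi)\Vol=0$, and then substitutes this into the integral identity \eqref{Aa}, using Lemma~\ref{l:GrLee} and \eqref{aab}, to obtain $0=-\tfrac{1}{2(n+3)}\int_M(\triangle\varphi)^2\Vol$. Hence $\triangle\varphi=0$, so $\varphi\equiv 0$ and $S\equiv\bar S$; equality in \eqref{s2fab0} with vanishing left-hand side then forces $Rc_0=0$ for the original structure. You need some version of this quantitative argument (or another way to show $\varphi$ is constant) to reach the stated conclusion.
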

The next result slightly improves \cite[Corollary~1.3]{CSW}.
\begin{cor}\label{im}
If, in addition to the conditions of Theorem~\ref{thcw}, we suppose the equality \eqref{co1} holds, 
then
\begin{equation}\label{seqbc}
\int_M(S-\bar S)^2\Vol\leq\frac{4n(n+1)}{(n-1)(n+2)}\int_M|Rc_0|^2\Vol.
\end{equation}
If we have equality in \eqref{seqbc} then the compact pseudohermitian manifold is pseudo-Einstein with constant pseudohermitian scalar curvature.
\end{cor}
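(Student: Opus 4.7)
The plan is to derive Corollary \ref{im} as a direct consequence of Theorem \ref{thcw} by showing that, under hypothesis \eqref{co1}, the torsion correction term on the right-hand side of \eqref{s2fb},
\[
-8n\int_M\sum_{a,b=1}^{2n}A(e_a,Je_b)(\bi^2\varphi)_{[-1]}(e_a,e_b)\Vol,
\]
vanishes. Once this is established, \eqref{seqbc} is immediate from \eqref{s2fb}, and the characterization of equality follows from the equality statement in Theorem \ref{thcw} together with the observation recorded in the paragraph preceding Corollary \ref{maincor}.

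To prove the vanishing, I would integrate by parts twice in the horizontal directions, moving both covariant derivatives from $\varphi$ onto the coefficient $A(e_a,Je_b)$ against the pseudohermitian volume form $\Vol$. The symmetries of the pseudohermitian torsion --- $A$ symmetric, $A(J\cdot,J\cdot)=-A(\cdot,\cdot)$, and the fact that the Tanaka--Webster connection parallelizes $J$ --- imply that only the $J$-anti-invariant component $(\bi^2\varphi)_{[-1]}$ of the horizontal Hessian contributes, and that when the commutators arising in the process are contracted with the symmetric, $J$-anti-invariant, trace-free coefficient $A(e_a,Je_b)$, the algebraic Webster-curvature and pseudohermitian-torsion pieces cancel. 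What survives is
\[
\int_M\sum_{a,b=1}^{2n}A(e_a,Je_b)(\bi^2\varphi)_{[-1]}(e_a,e_b)\Vol=\int_M\varphi\sum_{a,b=1}^{2n}(\nabla_{e_b}\nabla_{e_a}A)(e_a,Je_b)\Vol,
\]
and this vanishes by assumption \eqref{co1}.

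For the equality case, equality in \eqref{seqbc} combined with the vanishing just proved forces equality in \eqref{s2fb}, and Theorem \ref{thcw} then gives that $(M,J,\theta)$ is CR equivalent to a pseudo-Einstein space. As noted in the paragraph preceding Corollary \ref{maincor}, the second Bianchi identity \eqref{bi2w} shows that on a compact pseudo-Einstein manifold the pseudohermitian scalar curvature $S$ is constant if and only if \eqref{co1} holds; since \eqref{co1} is among our assumptions, $S$ must be constant.

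The delicate point is the bookkeeping in the double horizontal integration by parts: every exchange of horizontal covariant derivatives produces curvature and torsion commutator terms, and one must verify that once these are contracted with the symmetric, $J$-anti-invariant coefficient $A(e_a,Je_b)$ --- and after accounting for the non-trivial divergence of frame vectors in the sub-Riemannian integration by parts formula --- only the term $\varphi\,(\nabla_{e_b}\nabla_{e_a}A)(e_a,Je_b)$ remains. Working in a local orthonormal frame adapted to $J$ and exploiting $\nabla J=0$ throughout should make these cancellations transparent.
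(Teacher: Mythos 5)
Your treatment of the first part is essentially the paper's: two integrations by parts (really just two applications of the divergence theorem, moving the derivatives off $\varphi$ one at a time) give
$\int_MA(e_a,Je_b)(\bi^2\varphi)_{[-1]}(e_a,e_b)\Vol=\int_M\varphi(\nabla_{e_b}\nabla_{e_a}A)(e_a,Je_b)\Vol=0$
under \eqref{co1}; no exchange of covariant derivatives is needed, so the commutator bookkeeping you worry about at the end never actually arises. The first equality is just the orthogonality of the $[1]$ and $[-1]$ components together with the fact that $A(\cdot,J\cdot)$ is $J$-anti-invariant. So \eqref{seqbc} follows from \eqref{s2fb} exactly as you say.

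The genuine gap is in the equality case. Equality in \eqref{seqbc} does force equality in \eqref{s2fb}, but the equality statement of Theorem~\ref{thcw} only yields that $(M,J,\theta)$ is \emph{CR equivalent} to a pseudo-Einstein space, i.e.\ that the conformally changed contact form $\bar\theta=\exp(-\frac{1}{n+1}\varphi)\theta$ is pseudo-Einstein --- not that the original structure $\theta$ is. The remark preceding Corollary~\ref{maincor}, which you invoke, applies to a manifold that \emph{is} pseudo-Einstein: only then does \eqref{bi2w} reduce to $dS(X)=4n(\bi_{e_a}A)(e_a,JX)$ so that \eqref{co1} forces $S$ constant. Since \eqref{co1} is a condition on the torsion and connection of $\theta$ and does not transform into the corresponding condition for $\bar\theta$, your argument cannot close. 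What is actually needed --- and what the paper does --- is to exploit the additional information available at equality: from \eqref{zer1} one has $(\nabla^2\varphi)_{[-1]}=0$, which together with \eqref{co1} gives $\int_MA(\g,\,J\g)\Vol=0$ as in \eqref{azer}; feeding this into the integral identity \eqref{Aa} and using the value of $\int_M|(\nabla^2\varphi)_{[1][0]}|^2\Vol$ from \eqref{aab1} produces
$0=-\int_M\big[\frac{1}{2(n+2)}(\triangle\varphi)^2+\frac{1}{2n}g(\nabla^2\varphi,\omega)^2\big]\Vol$,
whence $\triangle\varphi=0$, i.e.\ $S=\bar S$ is constant, and then equality in \eqref{seqbc} forces $Rc_0=0$, so the original structure itself is pseudo-Einstein with constant scalar curvature. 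Without this step (or an equivalent argument showing $\varphi$ is constant), the conclusion that $(M,J,\theta)$ itself is pseudo-Einstein with $S$ constant does not follow.
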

In the  torsion-free case we get from Corollary~\ref{maincor}
\begin{cor}\label{mainsas}
Let $n\ge 2$ and $(M,J,\theta)$ be a $(2n+1)$-dimensional compact torsion-free strictly pseudoconvex pseudohermitian manifold, i.e. a Sasakian manifold, with non-negative Webster Ricci tensor, $Rc\ge 0$. 
Then the following inequality holds
\begin{equation}\label{s2fab}
\int_M(S-\bar S)^2\Vol
\le\frac{2n(2n+3)}{(n-1)(n+3)}\int_M|Rc_0|^2\Vol.
\end{equation}
If the equality in \eqref{s2fab} holds, then    the compact Sasakian space is pseudo-Einstein with constant pseudohermitian  scalar curvature  and therefore it is a Riemannian Sasaki $\eta$-Einstein space  D-homothetic to a Riemannian Sasaki-Einstein space.



\end{cor}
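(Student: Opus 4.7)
The plan is to deduce Corollary~\ref{mainsas} as an immediate specialization of Corollary~\ref{maincor} to the torsion-free setting, and then to invoke standard facts from Sasakian geometry to identify the equality case.

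First I would observe that on a Sasakian manifold the pseudohermitian torsion vanishes, $A\equiv 0$. Under this hypothesis the positivity condition \eqref{cor} of Theorem~\ref{main}, namely $Rc(X,X)+2(n+2)A(JX,X)\ge 0$ for $X\in H$, reduces exactly to the non-negativity of the Webster Ricci tensor $Rc(X,X)\ge 0$ that is assumed in the statement. Moreover, the supplementary condition \eqref{co1}, $(\nabla_{e_b}\nabla_{e_a}A)(e_a,Je_b)=0$, is automatically satisfied since every covariant derivative of $A$ vanishes identically. Hence the hypotheses of Corollary~\ref{maincor} are all in force.

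Next I would apply Corollary~\ref{maincor} directly. Because $A=0$, the torsion correction term
\[
-8n\int_M\sum_{a,b=1}^{2n}A(e_a,Je_b)(\bi^2\varphi)_{[-1]}(e_a,e_b)\Vol
\]
in inequality \eqref{s2fa} vanishes, and \eqref{s2fa} collapses to the desired inequality \eqref{s2fab}. This gives the first assertion with no further work.

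For the equality case, if \eqref{s2fab} is an equality on a Sasakian manifold, then equality also holds in the torsion-augmented inequality coming from Theorem~\ref{main} (again because $A=0$), so Corollary~\ref{maincor} applies and yields that $(M,J,\theta)$ is pseudo-Einstein with constant pseudohermitian scalar curvature. The final step is purely a translation to the language of Riemannian Sasakian geometry: for a Sasakian structure, being pseudo-Einstein is equivalent to being Sasaki $\eta$-Einstein, and a Sasaki $\eta$-Einstein manifold with constant scalar curvature is well known to be $\mathcal{D}$-homothetic to a Sasaki--Einstein manifold via an appropriate rescaling of the contact form in the Reeb direction. This last identification is standard and would be invoked by reference rather than reproved. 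I do not expect a genuine obstacle here; the only mildly delicate point is checking that the equality clause of Corollary~\ref{maincor} does indeed transfer verbatim once the torsion term is removed, which is immediate from inspection of the inequalities.
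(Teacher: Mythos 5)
Your proposal is correct and follows essentially the same route as the paper: the inequality and the pseudo-Einstein/constant-scalar-curvature conclusion are obtained by specializing Corollary~\ref{maincor} to $A\equiv 0$ (under which \eqref{cor} becomes $Rc\ge 0$ and \eqref{co1} is automatic), and the final identification uses the standard relation between the Webster Ricci tensor and the Riemannian Ricci tensor of $h=g+\eta\otimes\eta$ together with Tanno's D-homothetic deformation. The only difference is that the paper writes out the explicit computation $Ric^h(X,X)=Rc(X,X)-2g(X,X)$, $Ric^h(\xi,\xi)=2n$ to exhibit the $\eta$-Einstein structure, whereas you invoke this step by reference.
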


\begin{rmrk}
In the torsion-free case, $A=0$, the positivity assumptions \eqref{lich} and \eqref{cor} coincide and because  the number $\frac{2n(2n+3)}{(n-1)(n+3)}$ is smaller than the number $\frac{4n(n+1)}{(n-1)(n+2)}$,
 we get a better estimate for Sasakian manifolds than the one following from \cite[Theorem~1.2]{CSW},  i.e. Theorem~\ref{thcw} above.
\end{rmrk}
In the proof of  Crollary~\ref{im} we also present a proof of Theorem~\ref{thcw}, \cite[Theorem~1.2]{CSW}. 

In the Appendix we  record  for self-sufficiency  some of the results of \cite{GL88,Gr,L1} in real variables (see also \cite[Appendix]{IVO}) including the Greenleaf's CR Bochner formula \cite{Gr}, the CR Paneitz operator and its non-negativity for $n>1$ \cite{GL88}.

\begin{conv}
\label{conven} \hfill\break\vspace{-15pt}

\begin{enumerate}[ a)]

\item We shall use $X,Y,Z,U$ to denote horizontal vector fields, i.e. $X,Y,Z,U\in H$.

\item $\{e_1,\dots,e_{2n}\}$ denotes a local orthonormal basis of the
horizontal space $H$.

\item The summation convention over repeated vectors from the basis $\{e_1,\dots,e_{2n}\}$ will be used. 
For example, for a (0,4)-tensor $P$, the
formula $k=P(e_b,e_a,e_a,e_b)$ means $ k=\sum_{a,b=1}^{2n}P(e_b,e_a,e_a,e_b).$


\end{enumerate}
\end{conv}

\textbf{Acknowledgments}  The research of both authors  is partially supported   by Contract DH/12/3/12.\allowbreak{}12.2017,
Contract 80-10-161/05.04.2021  with the Sofia University "St. Kliment Ohridski", 
and
the National Science Fund of Bulgaria, National Scientific Program "VIHREN'', Project No. KP-06-DV-7.

\section{Pseudohermitian manifolds and the Tanaka-Webster connection}

In this section we will briefly review the basic notions of the
pseudohermitian geometry of a CR manifold. Also, we recall some results (in
their real form) from \cite{L1,T,W,W1}, see also \cite{DT,IVO,IVZ}, which we
will use in this paper.

A CR manifold is a smooth manifold $M$ of real dimension 2n+1, with a fixed
n-dimensional complex sub-bundle $\mathcal{H}$ of the complexified tangent
bundle $\mathbb{C}TM$ satisfying $\mathcal{H} \cap \overline{\mathcal{H}}=0$
and $[ \mathcal{H},\mathcal{H}]\subset \mathcal{H}$. If we let $H=Re\,
\mathcal{H}\oplus\overline{\mathcal{H}}$, the real sub-bundle $H$ is
equipped with a formally integrable almost complex structure $J$. We assume
that $M$ is oriented and there exists a globally defined compatible contact
form $\theta$ such that ${H}=Ker\,\theta$. In other words, the hermitian
bilinear form
\begin{equation*}
2g(X,Y)=-d\theta(JX,Y)
\end{equation*}
is non-degenerate. The CR structure is called strictly pseudoconvex if $g$
is a positive definite tensor on $H$. The vector field $\xi$ dual to $\theta$
with respect to $g $ satisfying $\xi\lrcorner d\theta=0$ is called the Reeb
vector field. The almost complex structure $J$ is formally integrable in the
sense that
\begin{equation*}
([JX,Y]+[X,JY])\in {H}
\end{equation*}
and the Nijenhuis tensor
\begin{equation*}
N^J(X,Y)=[JX,JY]-[X,Y]-J[JX,Y]-J[X,JY]=0.
\end{equation*}
A CR manifold $(M,\theta,g)$ with a fixed compatible contact form $\theta$
is called \emph{a pseudohermitian manifold}%
\index{pseudohermitian manifold}. In this case the 2-form
\begin{equation*}
d\theta_{|_{{H}}}:=2\omega
\end{equation*}
is called the fundamental form. Note that the contact form is determined up
to a conformal factor, i.e. $\bar\theta=\nu\theta$ for a positive smooth
function $\nu$ defines another pseudohermitian structure called
pseudo-conformal to the original one.

\subsection{Invariant decompositions}

As usual any endomorphism $\Psi$ of $H$ can be decomposed with respect to
the complex structure $J$ uniquely into its $U(n)$-invariant $(2,0)+(0,2)$
and $(1,1)$ parts. In short we will denote these components correspondingly
by $\Psi_{[-1]}$ and $\Psi_{[1]}$. Furthermore, we shall use the same
notation for the corresponding  $2$-tensor, $\Psi(X,Y)=g(\Psi X,Y)$.
Explicitly, $\Psi=\Psi_{[1]}+\Psi_{[-1]}$, where
\begin{equation}
{\label{comp}} \Psi_{[1]}(X,Y)=%
\frac {1}{2}\left [ \Psi(X,Y)+\Psi(JX,JY)\right ], \qquad \Psi_{[-1]}(X,Y)=%
\frac {1}{2}\left [ \Psi(X,Y)-\Psi(JX,JY)\right ].
\end{equation}
The above notation is justified by the fact that the $(2,0)+(0,2)$ and $%
(1,1) $ components are the projections on the eigenspaces of the operator
\begin{equation*} 
\Upsilon =\ J\otimes J, \quad (\Upsilon \Psi) (X,Y)\overset{def}{=}\Psi
(JX,JY),
\end{equation*}
corresponding, respectively, to the eigenvalues $-1$ and $1$. Note that both
the metric $g$ and the 2-form $\omega$ belong to the [1]-component, since $%
g(X,Y)=g (JX,JY)$ and $\omega(X,Y)=\omega (JX,JY)$. Furthermore, the two
components are orthogonal to each other with respect to $g$.

\subsection{The Tanaka-Webster connection}

The Tanaka-Webster connection \cite{T,W,W1} is the unique linear connection $%
\nabla$ with torsion $T$ preserving a given pseudohermitian structure, i.e.,
it has the properties
\begin{equation}  \label{crcant2}
\nabla\xi=\nabla J=\nabla\theta=\nabla g=0,
\end{equation}
\begin{equation}  \label{torha}
\begin{aligned} & T(X,Y)=d\theta(X,Y)\xi=2\omega(X,Y)\xi, \quad T(\xi,X)\in
{H}, \\ & g(T(\xi,X),Y)=g(T(\xi,Y),X)=-g(T(\xi,JX),JY). \end{aligned}
\end{equation}
It is well known that the endomorphism $T(\xi,\cdot)$ is the obstruction for a
pseudohermitian manifold to be Sasakian. The symmetric endomorphism $T_\xi:{H%
}\longrightarrow {H}$ is denoted by $A$, $A(X,Y):=T(\xi,X,Y)$,  and it is called \emph{the torsion of
the pseudohermitian manifold} or \emph{pseudohermitian torsion.} The
pseudohermitian torsion $A$ is completely trace-free satisfying
\begin{equation} \label{tortrace}
A(e_a,e_a)=A(e_a,Je_a)=0, \quad A(X,Y)=A(Y,X)=-A(JX,JY).
\end{equation}
Let $R$ be the curvature of the Tanaka-Webster connection. The
pseudohermitian Ricci tensor $Ric$, the pseudohermitian scalar curvature $S$
and the pseudohermitian Ricci 2-form $\rho$ are defined by
\begin{equation*}
Ric(A,B)=R(e_a,A,B,e_a), \quad S=Ric(e_a,e_a),\quad
\rho(A,B)=\frac12R(A,B,e_a,Je_a).
\end{equation*}
We summarize below the well known properties of the curvature $R$ of the
Tanaka-Webster connection \cite{W,W1,L1} using real expression, see also
\cite{DT,IVZ,IVO, IV2}.
\begin{gather}  \label{torric}
Ric(X,Y)=Ric(Y,X),\quad
Ric(X,Y)-Ric(JX,JY)=4(n-1)A(X,JY),\\
\label{rho}
2\rho(X,JY)=-Ric(X,Y)-Ric(JX,JY)=R(e_a,Je_a,X,JY),\\
\label{div}
2(\nabla_{e_a}Ric)(e_a,X)= dS(X).\hskip2.5truein
\end{gather}
The equalities \eqref{torric} and \eqref{rho} imply
\begin{equation}  \label{rid}
Ric(X,Y)=\rho(JX,Y)+2(n-1)A(JX,Y),
\end{equation}
i.e. $\rho$ is the $(1,1)$-part of the pseudohermitian Ricci tensor, while
the $(2,0)+(0,2)$-part is given by the pseudohermitian torsion $A$. 

The Webster Ricci tensor $Rc$ is defined by
\begin{equation}\label{wric}
Rc(X,Y)=\rho(JX,Y)=Rc(JX,JY)=Rc(Y,X).
\end{equation}
The Webster Ricci tensor $Rc$ is symmetric, of type (1,1) with respect to $J$ and shares the same trace with the Ricci tensor $Ric$, $S=Ric(e_a,e_a)=\rho(Je_a,e_a)=Rc(e_a,e_a)$ due to \eqref{rid}.

The trace-free part $Rc_0$ of the Webster Ricci tensor is given by 
\begin{equation}\label{wric0}
Rc_0(X,Y)=Rc(X,Y)-\frac{S}{2n}g(X,Y).
\end{equation}
We recall that a pseudohermitian manifold is called pseudo-Einstein if the trace-free part of the Webster Ricci tensor vanishes.

\subsection{The Ricci identities for the Tanaka-Webster connection}

Let $f$ be a smooth function on a pseudohermitian manifold $M$ with $\nabla
f $ its horizontal gradient, $g(\nabla f,X)=df(X)$. 
The sub-Laplacian (or horizontal Laplacian) $\triangle f$ and the norm of the horizontal
gradient $\nabla f =df(e_a)e_a$ of a smooth function $f$ on $M$ are defined
respectively by
\begin{equation}  \label{lap}
\triangle f\ =-\ tr^g_H(\nabla^2f)\ = -\ \nabla^2f(e_a,e_a),
\qquad |\nabla f|^2\ =\ df(e_a)\,df(e_a).
\end{equation}

We have the next
Ricci identities for the Tanaka-Webster connection following from the general Ricci identities for a connection with torsion applying the properties of the pseudohermitian torsion listed in \eqref{torha}: 
\begin{equation}  \label{e:ricci identities}
\begin{aligned} & \nabla^2f (X,Y)-\nabla^2f(Y,X)=-2\omega(X,Y)df(\xi)\\ &
\nabla^2f (X,\xi)-\nabla^2f(\xi,X)=A(X,\nabla f)\\ & \nabla^3 f
(X,Y,Z)-\nabla^3 f(Y,X,Z)=-R(X,Y,Z,\nabla f) - 2\omega(X,Y)\nabla^2f
(\xi,Z)\\ &\nabla^3 f(X,Y,Z)-\nabla^3f(Z,Y,X)=-R(X,Y,Z,\nabla f)-R(Y,Z,X,\nabla f)-2\omega(X,Y)\nabla^2f(\xi,Z)\\ &-2\omega(Y,Z)\nabla^2f(\xi,X)+2\omega(Z,X)\nabla^2f(\xi,Y)+2\omega(Z,X)A(Y,\nabla f).
\end{aligned}
\end{equation}

An important consequence of  the first Ricci identity is the following fundamental formula
\begin{equation}  \label{xi1}
g(\nabla^2f,\omega)=\nabla^2f(e_a,Je_a)=-2n\,df(\xi).
\end{equation}
On the other hand, it follows from \eqref{lap} that the trace with respect to the metric is the sub-Laplacian:
\[
g(\nabla^2f,g)=\nabla^2f(e_a,e_a)=-\triangle f.
\]

We also recall the horizontal divergence theorem \cite{T}. Let $(M,
g,\theta) $ be a pseudohermitian manifold of dimension $2n+1$. For a fixed
local 1-form $\theta$ the form
$Vol_{\theta}=\theta\wedge\omega^{n}$ is a globally defined volume form since $Vol_{\theta}$ is independent on the
local 1-form $\theta$. The (horizontal) divergence of a horizontal vector
field/one-form $\sigma\in\Lambda^1\, (H)$ is defined by 
$
\nabla^*\, \sigma\ =-tr|_{H}\nabla\sigma=\ -(\nabla_{e_a}\sigma)e_a.
$ 
 If the manifold is compact it is well known  \cite{T} that
\begin{equation*}
\int_M (\nabla^*\sigma)Vol_{\theta}\ =\ 0.
\end{equation*}

{
\section{The positivity conditions, CR Cordes type inequality}
In this section we discuss the importance of the positivity conditions \eqref{lich} and \eqref{cor}, recover the CR Cordes type inequality esteblished in \cite{CM} giving more information in the equality case.

We recall from \cite{L1}, (see also \cite{BF74} and \cite{Be80}),  that if $%
n\geq 2$, a function $f\in \mathcal{C}^3(M)$ on a compact $(2n+1)$- dimensional strictly pseudoconvex pseudohermitian manifold is CR-pluriharmonic, i.e.,
locally it is the real part of a CR holomorphic function, if and only if  the non-negative CR Paneitz operator of $f$ vanishes, i.e.  the (1,1) trace-free part of the horizontal Hessian of $f$ is zero, $ 
(\nabla^2f)_{[1][0]}=0$. In fact, as shown in \cite{GL88}, 
only one fourth-order equation $Cf=0$ suffices for $(\nabla^2f)_{[1][0]}=0$ to hold. 

Now,  we recover the CR Cordes estimate  proved in \cite[Theorem~1]{CM} and give a bit more information in the equality case for dimensions bigger than three.
\begin{thrm}\cite{CM}\label{cord}
On a compact strictly pseudoconvex pseudohermitian manifold of dimension $2n+1$,  for any real-valued function $f$ and $n>1$ we have the inequality
\begin{multline}  \label{bohinw1}
\frac{n+2}{n}\int_M(\triangle f)^2\Vol\ge\int_M\Big[Rc(\nabla f,\nabla f)+2(n+2)A(J\nabla f,\nabla f)  +\left |(\nabla^2f)_{[1]}\right |^2 + \left
|(\nabla^2f)_{[-1]}\right |^2\Big]\Vol.
\end{multline}
The equality is achieved only for CR-pluriharmonic functions.

If, moreover, the positivity condition \eqref{cor} holds then
$$\frac{n+2}{n}\int_M(\triangle f)^2\Vol\ge\int_M\Big[\left |(\nabla^2f)_{[1]}\right |^2 +\left | (\nabla^2f)_{[-1]}\right |^2\Big]\Vol.
$$

In the case $n=1$, if we assume in addition that the CR-Paneitz operator is non-negative, then the inequality \eqref{bohinw1} holds also for $n=1$.
\end{thrm}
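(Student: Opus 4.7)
My plan is to apply Greenleaf's CR Bochner formula (recorded in the Appendix) to $f$ and integrate over $M$; the key inputs are the orthogonal decomposition of the horizontal Hessian, the Ricci identity \eqref{rid} tying $Ric$ to $Rc$, and the non-negativity of the CR Paneitz operator.

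First, I would apply Greenleaf's Bochner formula to $|\nabla f|^2$ and integrate over the compact manifold $M$. The left-hand side $\int_M \triangle|\nabla f|^2\Vol$ vanishes by the horizontal divergence theorem, while the cross term $\int_M g(\nabla f,\nabla\triangle f)\Vol$ is converted, after one integration by parts, to $\int_M(\triangle f)^2\Vol$. This produces an integral identity linking $\int_M|\nabla^2 f|^2\Vol$, $\int_M(\triangle f)^2\Vol$, $\int_M Ric(\nabla f,\nabla f)\Vol$, torsion contributions of the form $\int_M A(J\nabla f,\nabla f)\Vol$, and cross terms involving the Reeb derivative $df(\xi)$. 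Using \eqref{rid} I would then trade $Ric(\nabla f,\nabla f)$ for $Rc(\nabla f,\nabla f)+2(n-1)A(J\nabla f,\nabla f)$, so the Webster Ricci tensor begins to appear in the form dictated by \eqref{cor}.

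Second, I would invoke the orthogonal decomposition $\nabla^2 f=(\nabla^2 f)_{[1]}+(\nabla^2 f)_{[-1]}$ and further split the $[1]$ component into its $(1,1)$-trace-free piece $(\nabla^2 f)_{[1][0]}$ plus the trace term $-\frac{\triangle f}{2n}g$ (the coefficient forced by $g(\nabla^2 f,g)=-\triangle f$). This yields
\begin{equation*}
|\nabla^2 f|^2 \;=\; |(\nabla^2 f)_{[1][0]}|^2+|(\nabla^2 f)_{[-1]}|^2+\frac{(\triangle f)^2}{2n}.
\end{equation*}
Since $|(\nabla^2 f)_{[1][0]}|^2\ge 0$, this converts the $|\nabla^2 f|^2$ term produced by Bochner into the two squares appearing on the right-hand side of \eqref{bohinw1} plus a non-negative remainder that vanishes iff $(\nabla^2 f)_{[1][0]}=0$; by \cite{L1,GL88}, the latter characterizes CR-pluriharmonicity of $f$, which will pin down the equality case.

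The main technical obstacle is the bookkeeping of the $\xi$-derivative terms left over from the Bochner identity. I would process them via the fundamental identity \eqref{xi1}, which identifies $df(\xi)$ with (a multiple of) the $\omega$-trace of the horizontal Hessian, together with a second integration by parts using the Ricci identities \eqref{e:ricci identities}; the residual fourth-order term is exactly the CR Paneitz quadratic form applied to $f$, which is non-negative (automatically for $n>1$ by \cite{GL88}, by hypothesis for $n=1$). This same bookkeeping is what promotes the torsion coefficient from $2(n-1)$ produced by \eqref{rid} up to $2(n+2)$ in the final inequality, and it collects the prefactor $\frac{n+2}{n}=1+\frac{2}{n}$ in front of $\int_M(\triangle f)^2\Vol$ from the $\frac{1}{2n}$ trace contribution after absorbing it into the left-hand side.

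Finally, assembling the pieces yields \eqref{bohinw1}, with equality forcing $(\nabla^2 f)_{[1][0]}=0$, i.e.\ $f$ CR-pluriharmonic. Under the positivity assumption \eqref{cor}, the integrand $Rc(\nabla f,\nabla f)+2(n+2)A(J\nabla f,\nabla f)$ is non-negative pointwise and may simply be dropped to obtain the clean Cordes-type bound. The case $n=1$ is recovered by the same argument once the CR-Paneitz non-negativity hypothesis is imposed.
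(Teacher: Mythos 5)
Your route is the same as the paper's: integrate Greenleaf's CR--Bochner formula to get \eqref{bohin1}, trade $Ric$ for $Rc$ plus torsion via \eqref{rid} and \eqref{wric}, process the term $4\int_M\nabla^2f(\xi,J\nabla f)\Vol$ through the integral identity \eqref{idcr2} (which is exactly where the extra $\frac{2}{n}(\triangle f)^2$, the extra $4A(J\nabla f,\nabla f)$ promoting the torsion coefficient to $2(n+2)$, and the Paneitz term $-\frac{2}{n}P_f(\nabla f)$ all come from), then discard the integrated Paneitz term by its non-negativity (Lemma~\ref{l:GrLee} for $n>1$, hypothesis for $n=1$), with equality forcing $(\nabla^2f)_{[1][0]}=0$, i.e.\ CR-pluriharmonicity. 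Your first, third and fourth paragraphs reproduce this faithfully. Your second paragraph, however, is wrong on two counts, even though it turns out to be dispensable: the decomposition you state omits the $\omega$-trace --- by \eqref{xi1} and \eqref{hes10} the correct identity is $|\nabla^2 f|^2=|(\nabla^2 f)_{[1][0]}|^2+|(\nabla^2 f)_{[-1]}|^2+\frac{(\triangle f)^2}{2n}+2n(df(\xi))^2$ --- and, more importantly, the two squares on the right-hand side of \eqref{bohinw1} are $|(\nabla^2f)_{[1]}|^2$ and $|(\nabla^2f)_{[-1]}|^2$, whose sum is already the full $|\nabla^2f|^2$ by orthogonality of the $[1]$ and $[-1]$ components, so there is no ``non-negative remainder'' to drop at that stage. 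The only inequality in the whole argument is the omission of $-\frac{2}{n}\int_MP_f(\nabla f)\Vol=\frac{4}{n-1}\int_M|(\nabla^2f)_{[1][0]}|^2\Vol\ge 0$ coming from \eqref{panz1}; if you additionally extracted and dropped a copy of $|(\nabla^2f)_{[1][0]}|^2$ from the Hessian term you would double-count and would not land on \eqref{bohinw1}. Delete that paragraph (or replace it with the bare orthogonality $|\nabla^2f|^2=|(\nabla^2f)_{[1]}|^2+|(\nabla^2f)_{[-1]}|^2$) and your proof coincides with the paper's.
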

\begin{proof}
Insert \eqref{idcr2} into \eqref{bohin1} and apply \eqref{rid} and \eqref{wric} to obtain
\begin{multline}  \label{e:bohinw1}
\frac{n+2}{n}\int_M(\triangle f)^2\Vol=\int_M\Big[Rc(\nabla f,\nabla f)+2(n+2)A(J\nabla f,\nabla f)  \Big]\Vol \\+\int_M\Big[\left |(\nabla^2f)_{[1]}\right |^2 + \left
|(\nabla^2f)_{[-1]}\right |^2-\frac {2}{n}P(\gr)\Big]\Vol.
\end{multline}
Due to Lemma~\ref{l:GrLee} the last term in \eqref{e:bohinw1} is non-negative, $-P(\bi f)\ge 0$, and the inequality \eqref{bohinw1} follows from \eqref{e:bohinw1}.  Moreover, the equality in \eqref{bohinw1} can be achieved only when $P(\bi f)= 0=(\bi f)^2_{[1][0]}$  due again to Lemma~\ref{l:GrLee}. The assertion follows from the discussion above the formulation of the theorem.
\end{proof}
\begin{rmrk}
It was pointed out in \cite{CC09b} that in the original Greenleaf's CR-Bochner formula from \cite{Gr} the coefficient in front of $A$ is not correct. In the proof of \cite[Theorem~1]{CM} it was used  that incorrect CR-Bochner formula  and therefore the coefficient in front of the pseudohermitian  torsion $A$ in  \eqref{bohinw1}  differs a bit from the  coefficient in front of $A$ in the   formula  (22) of \cite[Theorem~1]{CM}.
\end{rmrk}
Applying \eqref{Aa} to the identity \eqref{e:bohinw1}, taking into account \eqref{hes10}, we also get 
\begin{multline}\label{e:bohinw2}
\frac {n+1}{n}\int_M(\triangle f)^2\Vol=\int_M\Big[Rc(\nabla f,\nabla f)+2(n+1)A(J\nabla f,\nabla f)\Big]\Vol\\+\int_M\Big[
\left |(\nabla^2f)_{[1][0]}\right |^2+ \left
|(\nabla^2f)_{[-1]}\right |^2-\frac {3}{2n}P(\gr)\Big]\Vol.
\end{multline}
\begin{rmrk}
Note that the expression in the  right-hand side of the first line in \eqref{e:bohinw2} is precisely the CR-Lichnerowicz positivity condition used to find a lower bound of the first eigenvalue of the sub-Laplacian  (see \cite{Gr,CC09a,CC09b,IVO}). Indeed, if one assumes the positivity condition \eqref{lich} to be in the form
$$Rc(X,X)+2(n+1)A(JX,X)\ge k_0g(X,X)$$
and $\triangle f =\lambda f$, one easily gets $\lambda\ge \frac n{n+1}k_0$ which is the Greenleaf's estimation of the first eigenvalue of the sub-Laplacian.
\end{rmrk}

\section{Proof of Theorem~\ref{main}}
We follow the approach of \cite{LT} in the Riemannian case and \cite{CSW} in the CR case. 

Denote by $\bar S$ the  average value of the scalar curvature,  $$\bar S=\int_MS\Vol.$$
Let  $\varphi$ be the unique solution of the following PDE:
\begin{equation}\label{scal}
\triangle\varphi=S-\bar{S}, \quad \int_M\varphi\Vol=0.
\end{equation}
We obtain 
\begin{equation}\label{in1}
\int_M(S-\bar S)^2\Vol=\int_M(S-\bar S)\triangle\varphi\Vol=\int_MdS(\g)\Vol,
\end{equation}
where we used an integration by parts to achieve the last equality.

We write, using \eqref{wric0}, the equality \eqref{rid} in the form
\begin{equation}\label{rid0}
Ric(X,Y)=Rc_0(X,Y)+\frac{S}{2n}g(X,Y)+2(n-1)A(JX,Y).
\end{equation}
In view of \eqref{rid0}, the second Bianchi identity \eqref{div} takes the form
$$2(\bi_{e_a}Rc_0)(e_a,X)+\frac1ndS(X)+4(n-1)(\bi_{e_a}A)(e_a,JX)=dS(X),
$$
which yields
\begin{equation}\label{bi2w}
dS(X)=\frac{2n}{n-1}(\bi_{e_a}Rc_0)(e_a,X)+4n(\bi_{e_a}A)(e_a,JX).
\end{equation}
Substitute \eqref{bi2w} into  \eqref{in1} to get after an integration by parts, applying \eqref{tortrace} and \eqref{wric}, the following relations
\begin{multline}\label{s1}
\int_M(S-\bar S)^2\Vol=\int_MdS(\g)\Vol\\=\frac{2n}{n-1}\int_M(\bi_{e_a}Rc_0)(e_a,\g)\Vol+4n\int_M(\bi_{e_a}A)(e_a,J\g)\Vol\\
=-\frac{2n}{n-1}\int_MRc_0(e_a,e_b)\bi^2\varphi(e_a,e_b)\Vol-4n\int_MA(e_a,Je_b)\bi^2\varphi(e_a,e_b)\Vol\\
=-\frac{2n}{n-1}\int_MRc_0(e_a,e_b)(\bi^2\varphi)_{[1][0]}(e_a,e_b)\Vol-4n\int_MA(e_a,Je_b)(\bi^2\varphi)_{[-1]}(e_a,e_b)\Vol.
\end{multline}
Applying the Young's inequality $2ab\le \alpha a^2+\frac1{\alpha}b^2$, we get from \eqref{s1} that
\begin{multline}\label{s2}
\int_M(S-\bar S)^2\Vol\\
\le\frac{n}{n-1}\int_M\Big[\alpha|Rc_0|^2+\frac1{\alpha}|(\bi^2\varphi)_{[1][0]}|^2\Big]\Vol-4n\int_MA(e_a,Je_b)(\bi^2\varphi)_{[-1]}(e_a,e_b)\Vol.
\end{multline}
At this point we need to evaluate the norm of the (1,1) trace-free part of the horizontal Hessian. To do this, we involve the positivity condition \eqref{cor}. We have
\begin{prop}
On a compact strictly pseudoconvex pseudohermitian manifold of dimension $2n+1$ for $n\ge2$ and for any smooth function $f$ we have
\begin{multline}  \label{e:bohin1wi}
\int_M(\triangle f)^2\Vol=\frac{2n}{2n+3}\int_M\Big[Rc(\nabla f,\nabla f)+2(n+2)A(J\nabla f,\nabla f)  \Big]\Vol \\+\int_M\Big[\frac{2n(n+3)}{(n-1)(2n+3)}\left |(\nabla^2f)_{[1][0]}\right |^2 +\frac{2n}{2n+3} \left
|(\nabla^2f)_{[-1]}\right |^2+\frac{4n^2}{2n+3}(df(\xi))^2\Big]\Vol.
\end{multline}
\end{prop}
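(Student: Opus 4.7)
The proposition is an equality refinement of the integrated CR-Bochner identity \eqref{e:bohinw1}: the $[1]$-part of the horizontal Hessian is resolved into its trace and trace-free pieces, and the Paneitz integral is traded for a multiple of $\int_M|(\bi^2f)_{[1][0]}|^2\Vol$. My plan is a purely algebraic combination of three integral identities already available in the paper and its Appendix; no new integration by parts on the manifold is required.

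Concretely, I start from \eqref{e:bohinw1} and apply the pointwise decomposition \eqref{Aa},
$$|(\bi^2f)_{[1]}|^2=|(\bi^2f)_{[1][0]}|^2+\frac{(\triangle f)^2}{2n}+2n(df(\xi))^2,$$
which itself comes from splitting the symmetric $(1,1)$-part of the Hessian as $(\bi^2f)_{[1][0]}-\frac{\triangle f}{2n}g$ and recognising the antisymmetric piece as $-df(\xi)\,\omega$ with $|\omega|^2=2n$. Absorbing the new $\frac{(\triangle f)^2}{2n}$ term into the left-hand side contracts the coefficient of $\int_M(\triangle f)^2\Vol$ from $\frac{n+2}{n}$ down to $\frac{n+2}{n}-\frac{1}{2n}=\frac{2n+3}{2n}$, after which the right-hand side still contains the residual Paneitz term $-\frac{2}{n}\int_M P(\bi f)\Vol$.

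To eliminate this Paneitz term and thereby obtain an equality rather than merely the inequality of Theorem~\ref{cord}, I invoke the integrated Graham--Lee identity (the sharp form of the non-negativity Lemma~\ref{l:GrLee}, recorded in the Appendix from \cite{GL88}), which for $n\ge 2$ reads
$$-\int_M P(\bi f)\Vol=\frac{2n}{n-1}\int_M|(\bi^2f)_{[1][0]}|^2\Vol.$$
Substituting this shifts the Paneitz contribution onto the $|(\bi^2f)_{[1][0]}|^2$ term, producing the coefficient $1+\frac{2}{n}\cdot\frac{2n}{n-1}=\frac{n+3}{n-1}$; a final normalisation of the identity by $\frac{2n}{2n+3}$ then yields precisely the constants appearing in \eqref{e:bohin1wi}.

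The only genuinely nontrivial input is the Graham--Lee integrated equality itself, whose derivation is a careful but standard integration by parts combining the definition of the CR Paneitz operator with the Ricci identities \eqref{e:ricci identities} for $\bi^3 f$ and the symmetries of the Tanaka--Webster curvature and torsion. The factor $\frac{1}{n-1}$ it produces is exactly what forces the present proposition to $n\ge 2$; in dimension three ($n=1$) the identity degenerates and the Paneitz operator must be assumed non-negative separately, in line with the case distinction already present in Theorem~\ref{cord}.
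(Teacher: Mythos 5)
Your argument is correct and is essentially the paper's own proof: the paper likewise obtains \eqref{e:bohin1wi} by combining \eqref{e:bohinw1} with the splitting \eqref{hes10} of $\left|(\nabla^2f)_{[1]}\right|^2$ and the integrated Graham--Lee identity \eqref{panz1} of Lemma~\ref{l:GrLee}, and all your coefficient computations check out. The only slip is a mislabel: the pointwise decomposition you quote is \eqref{hes10}, not \eqref{Aa} (which is the torsion identity), but the formula you actually use is the right one.
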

\begin{proof}
It follows directly  from \eqref{e:bohinw1}, Lemma~\ref{l:GrLee} and \eqref{hes10}.
\end{proof}
The positivity condition \eqref{cor}
and \eqref{e:bohin1wi} yield the inequality
$$\int_M\left |(\nabla^2f)_{[1][0]}\right |^2\Vol\le\frac{(n-1)(2n+3)}{2n(n+3)}\int_M(\triangle f)^2\Vol,
$$
which taken with respect to the function $\varphi$, then substituted into \eqref{s2} and applying \eqref{scal}  lead to the inequalty
\begin{multline}\label{s2f}
\Big(1-\frac{2n+3}{2\alpha(n+3)}\Big)\int_M(S-\bar S)^2\Vol\\
\le\frac{\alpha n}{n-1}\int_M|Rc_0|^2\Vol-4n\int_MA(e_a,Je_b)(\bi^2\varphi)_{[-1]}(e_a,e_b)\Vol.
\end{multline}
Take $\alpha=\frac{2n+3}{n+3}$ to obtain from \eqref{s2f} the   inequality \eqref{s2fa}, which completes the proof of the first part of Theorem~\ref{main}.

Suppose we have  equality  in \eqref{s2fa}. Take $\alpha=\frac{2n+3}{n+3}$ into \eqref{s2}  and then use the expression for $\left |(\nabla^2f)_{[1][0]}\right |^2$ from  \eqref{e:bohin1wi} substituted into \eqref{s2} to get, taking into account \eqref{scal}, that
\begin{equation}\label{zero}0\le-\frac{n}{2n+3}\int_M\Big[ Rc(\nabla\varphi,\nabla\varphi)+2(n+2)A(J\nabla\varphi,\nabla\varphi)+ \left |(\nabla^2\varphi)_{[-1]}\right |^2+2n(d\varphi(\xi))^2\Big]\Vol.
\end{equation}
The positivity condition \eqref{cor} and \eqref{zero} imply
\begin{equation}\label{zer}
 Rc(\nabla\varphi,\nabla\varphi)+2(n+2)A(J\nabla\varphi,\nabla\varphi)=0;\quad (\nabla^2\varphi)_{[-1]}=0; \quad (d\varphi(\xi))^2=0.
\end{equation}
The equality in \eqref{s2fa} and the second equality in \eqref{zer} yield \eqref{seqa}.

Substitute \eqref{zer} into \eqref{e:bohin1wi} and use \eqref{seqa} to get
\begin{equation}\label{aab}\int_M\left |(\nabla^2\varphi)_{[1][0]}\right |^2\Vol=\frac{(n-1)(2n+3)}{2n(n+3)}\int_M(\triangle \varphi)^2\Vol=\frac{(2n+3)^2}{(n+3)^2}\int_M|Rc_0|^2\Vol.
\end{equation}
Applying \eqref{seqa} and \eqref{zer}  to \eqref{s1} yields
\begin{multline}\label{s11}
\int_M(S-\bar S)^2\Vol=\frac{2n(2n+3)}{(n-1)(n+3)}\int_M|Rc_0|^2\Vol\\
=-\frac{2n}{n-1}\int_MRc_0(e_a,e_b)(\bi^2\varphi)_{[1][0]}(e_a,e_b)\Vol,
\end{multline}
which together with \eqref{aab} implies that we have equality in the Young's inequality. This shows that
\begin{equation}
Rc_0(e_a,e_b)=-\frac{n+3}{2n+3}(\bi^2\varphi)_{[1][0]}(e_a,e_b),
\end{equation}
which implies that the contact form $\bar\theta=exp(-\frac{2n+3}{(n+3)(n+2)}\varphi)\theta$ will be pseudo-Einstein by \cite[Proposition~5.9]{DT}.
This completes the proof of Theorem~\ref{main}.
\subsection{Proof of Corollary~\ref{maincor}} Two integration by parts yield
\begin{multline*}
\int_MA(e_a,Je_b)(\bi^2\varphi)_{[-1]}(e_a,e_b)\Vol=\int_MA(e_a,Je_b)\bi^2\varphi(e_a,e_b)\Vol=-\int_M(\nabla_{e_a}A)(e_a,J\g)\Vol\\
=\int_M\varphi(\nabla_{e_b}\nabla_{e_a}A)(e_a,Je_b)\Vol=0
\end{multline*}
due to \eqref{co1}, which proves the first part.

To show the second part, suppose we have an equality in \eqref{s2fab0}. Then \eqref{zer} and \eqref{aab} hold true.  We obtain the following chain of equalities by integration by parts, using the second equality in \eqref{zer}:
\begin{multline}\label{azer}
\int_MA(J\g,\g)\Vol=-\int_M\varphi(\nabla_{e_a}A)(e_a,J\g)\Vol-\int_M\varphi A(e_a,Je_b)\nabla^2\varphi(e_a,e_b)\Vol\\=
-\int_M\varphi(\nabla_{e_a}A)(e_a,J\g)\Vol-\int_M\varphi A(e_a,Je_b)(\nabla^2\varphi)_{[-1]}(e_a,e_b)\Vol\\=-\int_M\varphi(\nabla_{e_a}A)(e_a,J\g)\Vol=\frac12\int_M\varphi^2(\nabla_{e_b}\nabla_{e_a}A)(e_a,Je_b)\Vol=0.
\end{multline}
Applying \eqref{azer}, Lemma~\ref{l:GrLee}  and the third equality in \eqref{zer} to \eqref{Aa}, we obtain  using \eqref{aab} that
\begin{multline}\label{fincor}
0=2\int_MA(\g,J\g)\Vol=\int_M\Big[\frac1{2n}(\triangle\varphi)^2-\frac1{n-1}\left |(\nabla^2\varphi)_{[1][0]}\right |^2\Big]\Vol\\=\int_M\Big[\frac1{2n}(\triangle\varphi)^2-\frac{2n+3}{2n(n+3)}(\triangle\varphi)^2\Big]\Vol=-\frac1{2(n+3)}\int_M(\triangle\varphi)^2\Vol.
\end{multline}
Now, \eqref{fincor} shows that $\triangle\varphi=0$ and  the manifold is pseudo-Einstein with constant pseudohermitian scalar curvature, which completes the proof of Corollary~\ref{maincor}.

The equality 
$$
\Big|Rc-\frac{\bar{S}}{2n}g\Big|^2=\Big|Rc-\frac{S}{2n}g\Big|^2+\frac1{2n}(S-\bar{S})^2
$$
together with Corollary~\ref{maincor} imply
\begin{cor}
In the conditions of Corollary~\ref{maincor} one has 
$$
\int_M\Big|Rc-\frac{\bar{S}}{2n}g\Big|^2\Vol\le\frac{n(n+4)}{(n-1)(n+3)}\int_M\Big|Rc-\frac{S}{2n}g\Big|^2\Vol.
$$

\end{cor}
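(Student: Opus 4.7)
The plan is to combine the pointwise tensorial identity displayed immediately above the corollary with the integral estimate from Corollary~\ref{maincor}, which together give everything in one line of algebra. First I would verify the pointwise identity by writing
$$
Rc-\frac{\bar S}{2n}g \;=\; Rc_0+\frac{S-\bar S}{2n}g,
$$
expanding the squared horizontal norm, and noting that the cross term vanishes because $Rc_0$ is trace-free with respect to $g$, while $|g|^2=\sum_{a,b}g(e_a,e_b)^2=2n$ produces the prefactor $\tfrac{1}{2n}$ in front of $(S-\bar S)^2$.

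Next I would integrate this identity over $M$ against $\Vol$, obtaining
$$
\int_M\Big|Rc-\frac{\bar S}{2n}g\Big|^2\Vol=\int_M|Rc_0|^2\Vol+\frac{1}{2n}\int_M(S-\bar S)^2\Vol,
$$
and then apply Corollary~\ref{maincor} to the last integral in the form
$$
\frac{1}{2n}\int_M(S-\bar S)^2\Vol\le\frac{2n+3}{(n-1)(n+3)}\int_M|Rc_0|^2\Vol.
$$

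Finally, adding the two contributions gives the coefficient
$$
1+\frac{2n+3}{(n-1)(n+3)}=\frac{(n-1)(n+3)+(2n+3)}{(n-1)(n+3)}=\frac{n^2+4n}{(n-1)(n+3)}=\frac{n(n+4)}{(n-1)(n+3)},
$$
which is exactly the constant claimed, and the conclusion follows because $|Rc_0|^2=\bigl|Rc-\tfrac{S}{2n}g\bigr|^2$ by \eqref{wric0}. There is no real obstacle here: the pointwise identity is a purely algebraic consequence of the trace-freeness of $Rc_0$ and the fact that $g$ acts as the identity on $H$, and the estimate is a direct quotation of Corollary~\ref{maincor}; the only point requiring a moment's verification is the vanishing of the cross term $\langle Rc_0,g\rangle$, which is immediate from $\sum_a Rc_0(e_a,e_a)=0$.
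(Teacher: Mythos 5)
Your proposal is correct and follows exactly the paper's argument: the paper states the same pointwise identity $\bigl|Rc-\frac{\bar S}{2n}g\bigr|^2=\bigl|Rc-\frac{S}{2n}g\bigr|^2+\frac{1}{2n}(S-\bar S)^2$ (which holds by the orthogonality of the trace-free part $Rc_0$ to $g$ and $|g|^2=2n$, as you verify) and then combines it with Corollary~\ref{maincor}. Your arithmetic producing the constant $\frac{n(n+4)}{(n-1)(n+3)}$ is also correct.
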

\subsection{Proof of Corollary~\ref{mainsas}}
It remains to show only the last part of Corollary~\ref{mainsas}.  In the torsion-free case, i.e. in Sasakian case, it is well known that the Ricci tensor $Ric^h$ of the Riemannian metric $h=g+\eta\otimes\eta$ and the Webster Ricci tensor $Rc$ are connected by (see e.g. \cite{DT})
\begin{equation}\label{homo}Ric^h(X,X)=Rc(X,X)-2g(X,X), \qquad Ric^h(\xi,\xi)=2n.
\end{equation}
If we have pseudo-Einstein Sasakian structure, we obtain from \eqref{homo}
$$Ric^h(X,X)=\frac{S-4n}{2n}g(X,X), \qquad Ric^h=\frac{S-4n}{2n}h+\Big(2n-\frac{S-4n}{2n}\Big) \eta\otimes\eta
$$
showing that the Riemannian Sasaki structure is $\eta$-Einstein \cite{Okum} and the Tanno's D-homothetic deformation $\bar{\eta}=a\eta, \quad \bar{\xi}=\frac1a\xi, \quad \bar h=ah+a(a-1)\eta\otimes\eta$ makes it  a Sasaki-Einstein space for a suitable constant $a$ \cite{Tanno} {c.f. also \cite{BGM}.
\section{Proof of Corollary~\ref{im}}  For completeness and a better understanding of the proof of Corollary~\ref{im}, we give  a proof of \cite[Theorem~1.2]{CSW}, Theorem~\ref{thcw}.

The next result involves the positivity condition \eqref{lich}. We have
\begin{prop}
On a compact strictly pseudoconvex pseudohermitian manifold of dimension $2n+1$ for $n\ge2$ and for any smooth function $f$ we have
\begin{multline}\label{e:bohinw2i}
\int_M(\triangle f)^2\Vol=\frac{n}{n+1}\int_M\Big[Rc(\nabla f,\nabla f)+2(n+1)A(J\nabla f,\nabla f)\Big]\Vol\\+\int_M\Big[\frac{n(n+2)}{n^2-1}
\left |(\nabla^2f)_{[1][0]}\right |^2+ \frac{n}{n+1} \left
|(\nabla^2f)_{[-1]}\right |^2\Big]\Vol.
\end{multline}
\end{prop}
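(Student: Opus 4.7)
The plan is to obtain \eqref{e:bohinw2i} as a direct algebraic consequence of the CR-Bochner-type identity \eqref{e:bohinw2}, already recorded above, together with the integral identity for the CR-Paneitz term provided by Lemma~\ref{l:GrLee}. In contrast to the proof of the sibling identity \eqref{e:bohin1wi}, the Hessian trace-splitting formula \eqref{hes10} is \emph{not} needed here, because the right-hand side of \eqref{e:bohinw2} is already written in terms of the trace-free $(1,1)$-piece $\left|(\nabla^2 f)_{[1][0]}\right|^2$ rather than the full $[1]$-component.

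First, I integrate \eqref{e:bohinw2} over $M$ and read off the schematic form
\begin{equation*}
\frac{n+1}{n}\int_M(\triangle f)^2\Vol=\int_M\Big[Rc(\nabla f,\nabla f)+2(n+1)A(J\nabla f,\nabla f)\Big]\Vol+\int_M\Big[\left|(\nabla^2f)_{[1][0]}\right|^2+\left|(\nabla^2f)_{[-1]}\right|^2-\tfrac{3}{2n}P(\nabla f)\Big]\Vol.
\end{equation*}
Next, I invoke Lemma~\ref{l:GrLee}, whose integrated content is precisely
\begin{equation*}
-\int_M P(\nabla f)\Vol=\frac{2n}{n-1}\int_M\left|(\nabla^2 f)_{[1][0]}\right|^2\Vol,\qquad n\ge2,
\end{equation*}
which is exactly what manifests the non-negativity of the CR-Paneitz operator in these dimensions. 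Multiplying this identity by $\tfrac{3}{2n}$ and substituting converts the $-\tfrac{3}{2n}P(\nabla f)$-integral into $\tfrac{3}{n-1}\int_M\left|(\nabla^2 f)_{[1][0]}\right|^2\Vol$. Combining this with the already present $\left|(\nabla^2 f)_{[1][0]}\right|^2$-contribution produces the total coefficient
$$1+\frac{3}{n-1}=\frac{n+2}{n-1}$$
in front of $\int_M\left|(\nabla^2 f)_{[1][0]}\right|^2\Vol$.

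Finally, I multiply the whole identity through by $\tfrac{n}{n+1}$ to clear the overall factor on the left. This turns the $Rc,A$ coefficient into $\tfrac{n}{n+1}$, the $\left|(\nabla^2 f)_{[-1]}\right|^2$ coefficient into $\tfrac{n}{n+1}$, and the $\left|(\nabla^2 f)_{[1][0]}\right|^2$ coefficient into $\tfrac{n}{n+1}\cdot\tfrac{n+2}{n-1}=\tfrac{n(n+2)}{n^2-1}$, yielding \eqref{e:bohinw2i} exactly. Every step is purely substitutional and rational arithmetic, so there is no serious conceptual obstacle; the only genuine input is the Paneitz identity from Lemma~\ref{l:GrLee}, which is cited rather than reproved, so the difficulty has been pushed entirely into the Appendix.
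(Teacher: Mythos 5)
Your proposal is correct and follows exactly the paper's route: the paper's own proof is the one-line remark that the identity ``follows directly from \eqref{e:bohinw2} and Lemma~\ref{l:GrLee}'', and you have simply carried out that substitution and arithmetic explicitly (including the correct observation that \eqref{hes10} is not needed here, since \eqref{e:bohinw2} is already expressed via $(\nabla^2f)_{[1][0]}$). The coefficient bookkeeping $1+\tfrac{3}{n-1}=\tfrac{n+2}{n-1}$ and the final multiplication by $\tfrac{n}{n+1}$ are both accurate.
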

\begin{proof}
It follows directly  from \eqref{e:bohinw2} and  Lemma~\ref{l:GrLee}.
\end{proof}
The positivity condition \eqref{lich} and \eqref{e:bohinw2i} yield the inequality
$$\int_M\left |(\nabla^2f)_{[1][0]}\right |^2\Vol\le\frac{n^2-1}{n(n+2)}\int_M(\triangle f)^2\Vol,
$$
which taken with respect to the function $\varphi$, then substituted into \eqref{s2} and applying \eqref{scal}  lead to the inequalty
\begin{multline}\label{s2fi}
\Big(1-\frac{n+1}{\alpha(n+2)}\Big)\int_M(S-\bar S)^2\Vol
\le\frac{\alpha n}{n-1}\int_M|Rc_0|^2\Vol-4n\int_MA(e_a,Je_b)(\bi^2\varphi)_{[-1]}(e_a,e_b)\Vol.
\end{multline}
Taking $\alpha=\frac{2(n+1)}{n+2}$, we obtain from \eqref{s2fi} the   inequality \eqref{s2fb}, which completes the proof of the first part of Theorem~\ref{thcw} from \cite{CSW}.

The second part follows similarly to the proof of the second part of Theorem~\ref{main}. Indeed,
suppose we have  equality  in \eqref{s2fb}. Take $\alpha=\frac{2(n+1)}{n+2}$ into \eqref{s2}, then use the expression for $\left |(\nabla^2f)_{[1][0]}\right |^2$ from  \eqref{e:bohinw2i} substituted into \eqref{s2} to get, taking into account \eqref{scal}, that
\begin{equation}\label{zero1}0\le-\frac{n}{2(n+1)}\int_M\Big[ Rc(\nabla\varphi,\nabla\varphi)+2(n+1)A(J\nabla\varphi,\nabla\varphi)+ \left |(\nabla^2\varphi)_{[-1]}\right |^2\Big]\Vol.
\end{equation}
The positivity condition \eqref{lich} and \eqref{zero1} imply
\begin{equation}\label{zer1}
 Rc(\nabla\varphi,\nabla\varphi)+2(n+1)A(J\nabla\varphi,\nabla\varphi)=0;\quad (\nabla^2\varphi)_{[-1]}=0.
\end{equation}
The equality in \eqref{s2fb} and the second equality in \eqref{zer1} yield \eqref{seqb}.

Substitute \eqref{zer1} into \eqref{e:bohinw2i} and use \eqref{seqb} to get
\begin{equation}\label{aab1}\int_M\left |(\nabla^2\varphi)_{[1][0]}\right |^2\Vol=\frac{n^2-1}{n(n+2)}\int_M(\triangle\varphi)^2\Vol=\frac{(2n+2)^2}{(n+2)^2}\int_M|Rc_0|^2\Vol.
\end{equation}
Applying \eqref{seqb} and \eqref{zer1}  to \eqref{s1} yields
\begin{multline*}
\int_M(S-\bar S)^2\Vol=\frac{4n(n+1)}{(n-1)(n+2)}\int_M|Rc_0|^2\Vol
=-\frac{2n}{n-1}\int_MRc_0(e_a,e_b)(\bi^2\varphi)_{[1][0]}(e_a,e_b)\Vol,
\end{multline*}
which, together with \eqref{aab1}, implies that we have equality in the Young's inequality. This shows that
\begin{equation*}
Rc_0(e_a,e_b)=-\frac{n+2}{2n+2}(\bi^2\varphi)_{[1][0]}(e_a,e_b),
\end{equation*}
yielding  that the contact form $\bar\theta=exp(-\frac{1}{n+1}\varphi)\theta$ will be pseudo-Einstein by \cite[Proposition~5.9]{DT} which completes the proof of Theorem~\ref{thcw} of \cite{CSW}.

To finish the proof of  Corollary~\ref{im}, it remains to show only that if we have equality in \eqref{seqbc} then the manifold is pseudo-Einstein with constant pseudohermitian scalar curvature. 

Indeed, in the equality case we have that \eqref{zer1} and \eqref{aab1} are valid. The second identity in \eqref{zer1} implies that \eqref{azer} holds true. Apply \eqref{azer}, Lemma~\ref{l:GrLee}  and the second equality in \eqref{zer1} to \eqref{Aa} to get, using \eqref{aab1}, that
\begin{multline}\label{fincor1}
0=2\int_MA(\g,J\g)\Vol=\int_M\Big[ -\frac {1}{2n}g(\nabla^2 \varphi,\omega)^2+\frac1{2n}(\triangle\varphi)^2-\frac1{n-1}\left |(\nabla^2\varphi)_{[1][0]}\right |^2\Big]\Vol\\=\int_M\Big[ -\frac {1}{2n}g(\nabla^2 \varphi,\omega)^2+\frac1{2n}(\triangle\varphi)^2-\frac{n+1}{n(n+2)}(\triangle\varphi)^2\Big]\Vol\\=-\int_M\Big[\frac1{2(n+2)}(\triangle\varphi)^2 +\frac {1}{2n}g(\nabla^2 \varphi,\omega)^2\Big]\Vol.
\end{multline}
Now, \eqref{fincor1} shows that $\triangle\varphi=0$ and  the manifold is pseudo-Einstein with constant pseudohermitian scalar curvature which completes the proof.

\section{Appendix}
The purpose of this section is to record  for self-sufficiency  proofs of some of the results of \cite{Gr,L1,GL88} in real variables (see also \cite[Appendix]{IVO}) including the Greenleaf's CR Bochner formula \cite{Gr}, the CR Paneitz operator and its non-negativity for $n>1$ \cite{GL88}, etc.

\subsection{The Greenleaf's CR-Bochner formula}
\begin{thrm}\cite{Gr}
On a strictly pseudoconvex pseudohermitian manifold of dimension $2n+1$, $n\geq 1$, the following Bochner-type identity holds
\begin{equation}  \label{bohh1}
-\frac12\triangle |\nabla f|^2=-g(\nabla(\triangle f),\nabla f)+Ric(\nabla
f,\nabla f)+2A(J\nabla f,\nabla f) +|\nabla^2f|^2+ 4\nabla df(\xi,J\nabla f).
\end{equation}
\end{thrm}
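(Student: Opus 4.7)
The plan is to prove~\eqref{bohh1} by direct computation from the definitions, using only metric compatibility of $\nabla$ and the Ricci identities in~\eqref{e:ricci identities}; no passage to complex notation is needed.

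First, I would unpack the left-hand side. Writing $|\nabla f|^{2}=g(\nabla f,\nabla f)$ and iterating $\nabla g=0$, together with $\triangle u=-\nabla^{2}u(e_{a},e_{a})$, yields
\[
-\tfrac{1}{2}\triangle|\nabla f|^{2}\;=\;\nabla^{3}f(e_{a},e_{a},\nabla f)+|\nabla^{2}f|^{2}.
\]
Separately, working in a locally parallel orthonormal frame and tracing $\nabla^{3}f$ in its last two slots produces $\nabla^{3}f(\nabla f,e_{a},e_{a})=-g(\nabla(\triangle f),\nabla f)$. Thus the whole identity reduces to computing the commutator $\nabla^{3}f(e_{a},e_{a},\nabla f)-\nabla^{3}f(\nabla f,e_{a},e_{a})$.

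For this, I would apply the last Ricci identity in~\eqref{e:ricci identities} with $X=Y=e_{a}$, $Z=\nabla f$, summed over $a$. The curvature side collapses to $Ric(\nabla f,\nabla f)$: $R(e_{a},e_{a},\nabla f,\nabla f)=0$ by antisymmetry in the first pair of slots, while $-R(e_{a},\nabla f,e_{a},\nabla f)=R(e_{a},\nabla f,\nabla f,e_{a})=Ric(\nabla f,\nabla f)$ after swapping the last pair (metric compatibility) and invoking the definition of $Ric$. The torsion corrections simplify as follows: the $\omega(e_{a},e_{a})$-term vanishes, and the remaining sums involve expressions of the form $\omega(e_{a},\nabla f)\,T(e_{a},\cdot)$ with $T\in\{\nabla^{2}f(\xi,\cdot),\,A(\cdot,\nabla f)\}$. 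The elementary identity $\sum_{a}df(Je_{a})\,e_{a}=-J\nabla f$ (a consequence of the $g$-skewness of $J$) converts the two surviving $\omega\cdot\nabla^{2}f(\xi,\cdot)$ contributions into exactly $+4\nabla df(\xi,J\nabla f)$ and the $\omega\cdot A$ contribution into $+2A(J\nabla f,\nabla f)$. Assembling the three pieces produces~\eqref{bohh1}.

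The only delicate point is sign-bookkeeping: the four $\omega$-occurrences in the third Ricci identity flip sign as the slots $X,Y,Z$ are permuted, and one must verify that they combine into the positive multiple $+4\nabla df(\xi,J\nabla f)$ and not a neighbouring combination. There is no analytical obstacle — the Bochner formula is a purely formal consequence of~\eqref{e:ricci identities} and the standard symmetries of $R$.
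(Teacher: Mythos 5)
Your proposal is correct and follows essentially the same route as the paper's proof: expand $-\tfrac12\triangle|\nabla f|^2$ into $\nabla^3f(e_a,e_a,e_b)df(e_b)+|\nabla^2f|^2$ and then commute the third covariant derivative via the Ricci identities \eqref{e:ricci identities}, with the curvature trace giving $Ric(\nabla f,\nabla f)$ and the $\omega$-torsion terms combining (using $\omega(X,Y)=g(JX,Y)$) into $4\nabla^2f(\xi,J\nabla f)+2A(J\nabla f,\nabla f)$. The paper merely states this commutation step as \eqref{boh3} without detail, whereas you spell out the bookkeeping, which checks out.
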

\begin{proof}
By definition we have
\begin{equation}  \label{boh1}
-\frac12\triangle |\nabla f|^2=\nabla^3f(e_a,e_a,e_b)
df(e_b)+\nabla^2f(e_a,e_b)\nabla^2f(e_a,e_b) =\nabla^3f(e_a,e_a,e_b) df(e_b)
+ |\nabla^2f|^2.
\end{equation}
To evaluate the first term in the right-hand side of \eqref{boh1},
we use the Ricci identities \eqref{e:ricci identities}.
Taking into account  $(\nabla_X T)(Y,Z)=0$ and applying successively
the Ricci identities \eqref{e:ricci identities}, we obtain
\begin{equation}  \label{boh3}
\nabla^3f(e_a,e_a,e_b)df(e_b)= -g(\nabla(\triangle f),\nabla f )+Ric(\nabla
f,\nabla f)+2A(J\nabla f,\nabla f)+4\nabla^2f(\xi,J\nabla f).
\end{equation}
A substitution of \eqref{boh3} into \eqref{boh1} completes the proof of %
\eqref{bohh1}.
\end{proof}
The next integral formula was  originally proved in \cite{Gr}, we take it in real notations from \cite{IVO}.

\begin{lemma}\cite{Gr}
\label{gr2} On a compact strictly pseudoconvex pseudohermitian manifold
of dimension $2n+1$, $n\geq 1$, we have the identity
\begin{equation}  \label{2}
\int_M\nabla^2f(\xi,J\nabla f)\Vol=-\int_M\Big[2n(df(\xi))^2
+A(J\nabla f,\nabla f)\Big] \Vol.
\end{equation}
\end{lemma}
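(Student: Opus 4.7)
My plan is to reduce the integrand pointwise via the second Ricci identity and then apply the horizontal divergence theorem to the remaining horizontal term; the key algebraic input is the trace identity \eqref{xi1}.

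First, I would apply the second identity in \eqref{e:ricci identities}, namely $\nabla^2 f(\xi,X)=\nabla^2 f(X,\xi)-A(X,\nabla f)$, with the choice $X=J\nabla f$. Since $\nabla\xi=0$, one has $\nabla^2 f(J\nabla f,\xi)=(J\nabla f)(df(\xi))$, so that
\begin{equation*}
\nabla^2 f(\xi,J\nabla f)=(J\nabla f)(df(\xi))-A(J\nabla f,\nabla f).
\end{equation*}
Thus the original identity reduces to showing
\begin{equation*}
\int_M (J\nabla f)(df(\xi))\,\Vol=-2n\int_M (df(\xi))^2\,\Vol.
\end{equation*}

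For this I would use horizontal integration by parts. The vector field $J\nabla f$ is horizontal and, using $\nabla J=0$ together with the fundamental trace identity $\nabla^2 f(e_a,Je_a)=-2n\,df(\xi)$ from \eqref{xi1}, one computes
\begin{equation*}
\operatorname{div}(J\nabla f)=g(\nabla_{e_a}(J\nabla f),e_a)=-\nabla^2 f(e_a,Je_a)=2n\,df(\xi).
\end{equation*}
Applying the horizontal divergence theorem to $u\,J\nabla f$ with $u=df(\xi)$, which satisfies $\operatorname{div}(u J\nabla f)=u\operatorname{div}(J\nabla f)+(J\nabla f)(u)$, yields the displayed identity and hence \eqref{2}.

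There is no serious obstacle here: the argument is just the Ricci identity plus one integration by parts. The only thing to be careful about is sign bookkeeping, both in the conversion between the $(2,0)$-tensor $\nabla^2 f$ and the operator $\nabla_{\,\cdot\,}\nabla f$, and in the sign convention for $\nabla^2 f(e_a,Je_a)=-2n\,df(\xi)$ given by \eqref{xi1} (with the sub-Laplacian sign convention in \eqref{lap}).
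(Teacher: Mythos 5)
Your argument is correct and is essentially the paper's own proof reorganized: the paper computes the divergence of the single one-form $D(X)=df(JX)\,df(\xi)$ (the dual, up to sign, of your vector field $df(\xi)\,J\nabla f$) and uses the second Ricci identity and \eqref{xi1} inside that one computation, whereas you apply the Ricci identity pointwise first and then integrate by parts. The ingredients and signs all check out, so this is the same approach.
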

\begin{proof}
We consider the horizontal 1-form defined by
$
D(X)=df(JX)df(\xi),
$
whose divergence is, taking into account the second formula of
\eqref{e:ricci
identities},
\begin{equation}  \label{vert2}
(\bi_{e_a}D)(e_a)=\nabla^2f(e_a,Je_a)df(\xi)-\nabla^2f(\xi,J\nabla
f)-A(J\nabla f,\nabla f).
\end{equation}
Integrating \eqref{vert2} over $M$ and using \eqref{xi1} implies \eqref{2},
which completes the proof of the lemma.
\end{proof}

\subsection{The CR-Paneitz operator}
The famous CR-Paneitz operator is defined as follows \cite{Li,GL88}. 

Given a function $f$ we define the one form,
\begin{equation}  \label{e:Pdef1}
P(X)\equiv P_{f}(X)=\nabla ^{3}f(X,e_{b},e_{b})+\nabla
^{3}f(JX,e_{b},Je_{b})+4nA(X,J\nabla f),
\end{equation}%
and also  a fourth order differential operator  (the so called CR-Paneitz operator in \cite{Chi06}),
\begin{equation}  \label{e:Cdef1}
Cf=(\nabla_{e_a} P)({e_a})=\nabla ^{4}f(e_a,e_a,e_{b},e_{b})+\nabla
^{4}f(e_a,Je_a,e_{b},Je_{b})-4n\nabla^* A(J\nabla f)-4n\,g(\nabla^2 f,JA).
\end{equation}
According to \eqref{comp}, the horizontal Hessian $\bi^2f$ splits into two parts,
$\bi^2f=(\bi^2f)_{[1]} + (\bi^2f)_{[-1]}$ , where
\begin{equation}\label{comp1}
\begin{split} (\nabla ^{2}f)_{[1]}(X,Y)=\frac{1}{2}\left[ (\nabla
^{2}f)(X,Y)+(\nabla ^{2}f)(JX,JY)\right],\\  (\nabla ^{2}f)_{[-1]}(X,Y)=\frac{1}{2}\left[ (\nabla
^{2}f)(X,Y)-(\nabla ^{2}f)(JX,JY)\right].
\end{split}
\end{equation}
In view of \eqref{xi1}, the  trace-free part  $(\nabla ^{2}f)_{[1][0]}$ of the $(1,1)$ component of the horizontal
Hessian is given by
\begin{equation}\label{hes10}
\begin{split}
(\nabla ^{2}f)_{[1][0]}=(\nabla ^{2}f)_{[1]}+\frac{\triangle f}{2n} g(X,Y)+df(\xi)\,\omega (X,Y);\\
|(\nabla ^{2}f)_{[1][0]}|^2=|(\nabla ^{2}f)_{[1]}|^2-\frac{(\triangle f)^2}{2n}-2n(df(\xi))^2.
\end{split}
\end{equation}
One of the basic results relating the above defined operators is the next lemma proved in \cite{GL88}, which we present in real notations, see e.g. \cite{IVO}.
\begin{lemma}\cite{GL88}\label{l:GrLee}
On a compact strictly pseudoconvex pseudohermitian manifold
of dimension $2n+1$, $n\geq 1$, the following identities hold true
\begin{gather}\label{panz}
\nabla_{e_{a}}(\bi^2f)_{[1][0]}(e_{a},X) =\frac{n-1}{2n}P_f(X), \\\label{panz1}
\int_M |(\bi^2f)_{[1][0]}|^2\Vol=-\frac{n-1}{2n}\int_M P_f(\gr)\Vol=\frac{n-1}{2n}\int_M f(Cf)\Vol.
\end{gather}
In particular, if $n>1$ the CR-Paneitz operator is non-negative, $\int_M f(Cf)\Vol\ge 0$.
\end{lemma}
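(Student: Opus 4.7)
The plan is to prove the pointwise identity \eqref{panz} first; the integral identity \eqref{panz1} then follows from two integrations by parts, and the non-negativity of the CR-Paneitz operator for $n > 1$ is an immediate consequence.

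For \eqref{panz}, I would expand $(\nabla^2 f)_{[1][0]}$ via \eqref{hes10} and take its first-slot divergence. Since $\nabla g = \nabla J = \nabla \omega = 0$, the left-hand side reduces to a linear combination of four third-order traces,
$$
T_1 = \nabla^3 f(e_a, e_a, X),\quad T_2 = \nabla^3 f(e_a, Je_a, JX),\quad T_3 = \nabla^3 f(X, e_a, e_a),\quad T_4 = \nabla^3 f(JX, e_a, Je_a),
$$
explicitly $\tfrac12(T_1 + T_2) - \tfrac{T_3}{2n} + \tfrac{T_4}{2n}$, using $d(\triangle f)(X) = -T_3$ and $d(df(\xi))(JX) = -T_4/(2n)$. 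The target $\tfrac{n-1}{2n} P_f(X)$ is linear in $T_3, T_4$ and $A(X, J\nabla f)$ by \eqref{e:Pdef1}, so it suffices to express $T_1, T_2$, and $T_4$ in terms of these together with curvature and torsion.

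For $T_1$, I would apply the second Ricci identity of \eqref{e:ricci identities} to commute slots $1$ and $3$; using $R(e_a, e_a, \cdot, \cdot) = 0$, $\omega(e_a, e_a) = 0$, and the definition of $Ric$, this gives $T_1 - T_3 = Ric(X, \nabla f) + 4 \nabla^2 f(\xi, JX) + 2 A(X, J\nabla f)$. For $T_2$, the first Ricci identity (swapping slots $1$ and $2$) combined with the basis change $\tilde e_a = Je_a$ (which gives $\sum_a \nabla^3 f(Je_a, e_a, JX) = -T_2$) and \eqref{rho}, \eqref{wric} yields $2T_2 = -2Rc(X, \nabla f) - 4n\nabla^2 f(\xi, JX)$. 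For $T_4$, pulling $\nabla_{JX}$ through the $\omega$-trace produces $T_4 = -2n \nabla^2 f(JX, \xi)$, and then the first Ricci identity in the mixed direction yields $T_4 = -2n \nabla^2 f(\xi, JX) - 2n A(X, J\nabla f)$. Substituting these three reductions into the divergence expression, using \eqref{rid} to convert $Ric - Rc = 2(n-1) A(J\cdot, \cdot)$ into an $A$-torsion contribution, and collecting coefficients shows that the $\nabla^2 f(\xi, JX)$ and $A(X, J\nabla f)$ pieces assemble into precisely $(n-1)[A(X, J\nabla f) - \nabla^2 f(\xi, JX)] = \tfrac{n-1}{2n}[T_4 + 4nA(X, J\nabla f)]$, giving \eqref{panz}.

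For the integral identity, I would apply the divergence theorem to the horizontal vector field dual to $(\nabla^2 f)_{[1][0]}(\cdot, \nabla f)$; using the symmetry of $(\nabla^2 f)_{[1][0]}$ and its orthogonality with the $g$-, $\omega$-, and $[-1]$-components (since the first two are $J$-invariant symmetric vs.\ $J$-invariant antisymmetric, and the last two are in orthogonal $U(n)$-types), this yields $\int_M |(\nabla^2 f)_{[1][0]}|^2 \Vol = -\tfrac{n-1}{2n} \int_M P_f(\nabla f)\Vol$. A second integration by parts using $Cf = (\nabla_{e_a} P_f)(e_a)$ from \eqref{e:Cdef1} produces $\int_M P_f(\nabla f) \Vol = -\int_M f\,(Cf)\,\Vol$, which gives the last equality of \eqref{panz1}. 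For $n > 1$, the manifestly non-negative left-hand side forces $\int_M f(Cf)\Vol \ge 0$. The main obstacle is the bookkeeping in the three Ricci-identity reductions: one must verify that the separate $\nabla^2 f(\xi, JX)$ contributions (from $T_1 - T_3$, $T_2$, and $T_4/(2n)$) combine to cancel the excess and reproduce exactly the $T_4$-coefficient in $P_f$, while the Ricci, Webster-Ricci, and direct torsion contributions together give precisely the coefficient $4n$ of $A(X, J\nabla f)$ in \eqref{e:Pdef1}.
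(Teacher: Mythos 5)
Your proposal is correct and follows essentially the same route as the paper: both compute the first-slot divergence of $(\nabla^2f)_{[1][0]}$ by commuting derivatives with the Ricci identities, using \eqref{xi1} to convert $\nabla^2f(\xi,JX)$ into the trace $\nabla^3f(JX,e_a,Je_a)$ and \eqref{rid} to absorb $Ric-Rc$ into a torsion term, and then obtain \eqref{panz1} by two integrations by parts together with the orthogonality of the $U(n)$-components. The only (immaterial) differences are organizational — you treat the trace correction of \eqref{hes10} simultaneously with $(\nabla^2f)_{[1]}$ rather than separately as in \eqref{e:divtrB}, and the commutations you invoke are in fact the third and fourth identities of \eqref{e:ricci identities}, not the first and second.
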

\begin{proof}
Taking into account the third and the fourth Ricci identity in  \eqref{e:ricci identities}, we have
\begin{equation*}
\begin{aligned}
\nabla ^{3}f(e_{a},e_{a},X)& =\nabla ^{3}f(X,e_{a},e_{a}) +Ric(X,\nabla
f)+4\nabla ^{2}f(\xi ,JX)+2A(JX,\gr ), \\
\nabla ^{3}f(e_{a},Je_{a},JX) &=\frac12\Big(\nabla ^{3}f(e_{a},Je_{a},JX)-\nabla ^{3}f(Je_{a},e_{a},JX)\Big)=-\rho(JX,\gr)-2n\bi^2f(\xi,JX).
\end{aligned}
\end{equation*}
The sum of the above equalities gives
\begin{multline}  \label{e:divB1}
2\nabla _{e_a}(\nabla ^{2}f)_{[1]}(e_{a},X)\\ =\nabla
^{3}f(X,e_{a},e_{a}) +Ric(X,\gr)-\rho(JX,\gr) +(4-2n)\bi^2f(\xi,JX)+2A(JX,\gr).
\end{multline}
The equality \eqref{xi1} yields  $\nabla^{2}f(JX,\xi )=-\frac {1}{2n}\nabla ^{3}f(JX,e_{a},Je_{a})$, which together with  the second Ricci  identity in \eqref{e:ricci identities} imply
\begin{equation}\label{xixi}\bi^2f(\xi,JX)=\nabla^{2}f(JX,\xi )-A(JX,\gr)=-\frac {1}{2n}\nabla ^{3}f(JX,e_{a},Je_{a})-A(JX,\gr).
\end{equation}
Therefore, using \eqref{rid} and \eqref{xixi}  we get from \eqref{e:divB1} that
\begin{equation}  \label{e:divB}
2\nabla _{e_a}(\nabla ^{2}f)_{[1]}(e_{a},X) =\nabla
^{3}f(X,e_{a},e_{a})+\frac{n-2}{n}\nabla
^{3}f(JX,e_{a},Je_{a}) 
+4(n-1)A(X,J\nabla f).
\end{equation}
The divergence of the trace part of $(\bi^2f)_{[1]}$ is computed as follows
\begin{multline}  \label{e:divtrB}
\nabla _{e_a}\left(- \frac{1}{2n}\triangle f\cdot g-df(\xi )\omega
\right) (e_{a},X) =\frac{1}{2n}\nabla ^{3}f(X,e_{a},e_{a})+\nabla
^{2}f(JX,\xi ) \\
=\frac{1}{2n}\nabla ^{3}f(X,e_{a},e_{a})-\frac{1}{2n}\nabla
^{3}f(JX,e_{a},Je_{a}).
\end{multline}
Now, the identities \eqref{e:divB} and \eqref{e:divtrB} imply  \eqref{panz}. 
The second identity \eqref{panz1} follows by an integration by parts from \eqref{panz}.
\end{proof}
The next result, essentially proved in \cite{CC09a}, involves the CR-Paneitz operator. We present it in real notations from \cite[Lemma~8.7]{IVO}:
\begin{lemma}\label{grn3}
On a strictly pseudoconvex pseudohermitian manifold
of dimension $2n+1$, $n\geq 1$, we have the identity
$$
\nabla ^{2}f(\xi ,Z)=\frac{1}{2n}\nabla ^{3}f(Z,Je_a,e_a)-A(Z,\nabla f).
$$ 
Additionally, in the compact case, the following integral identity holds true:
\begin{equation}\label{idcr2}
\int_{M}\nabla ^{2}f(\xi ,J\nabla f)Vol_{\theta }=\int_{M}\Big [-\frac{1}{2n}\left(
\triangle f\right) ^{2}+A(J\nabla f,\nabla f)-\frac{1}{2n}P_f(\gr)\Big ]\Vol.
\end{equation}
\end{lemma}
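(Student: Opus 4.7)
The plan is to establish the pointwise identity first, and then obtain the integral formula as a consequence using the Paneitz one-form and a single integration by parts.

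For the pointwise identity, I would start by differentiating the fundamental relation \eqref{xi1}, i.e. $\nabla^2 f(e_a,Je_a) = -2n\,df(\xi)$, in the direction $Z$. Since $\nabla\xi = \nabla J = 0$ and the summation over an orthonormal frame is tensorial, this yields $\nabla^3 f(Z,e_a,Je_a) = -2n\,\nabla^2 f(Z,\xi)$. Re-indexing $e_a \mapsto Je_a$ (which is again an orthonormal basis of $H$) gives the antisymmetry $\nabla^3 f(Z,e_a,Je_a) = -\nabla^3 f(Z,Je_a,e_a)$, so that $\nabla^2 f(Z,\xi) = \tfrac{1}{2n}\nabla^3 f(Z,Je_a,e_a)$. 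Combining this with the second Ricci identity of \eqref{e:ricci identities}, namely $\nabla^2 f(\xi,Z) = \nabla^2 f(Z,\xi) - A(Z,\nabla f)$, produces the claimed pointwise formula.

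For the integral identity, I would specialize the pointwise identity to $Z = J\nabla f$ and then rewrite the term $\nabla^3 f(J\nabla f, Je_a, e_a)$ using the definition \eqref{e:Pdef1} of the Paneitz form. Indeed, evaluating $P_f(\nabla f)$ gives
\begin{equation*}
P_f(\nabla f) = \nabla^3 f(\nabla f,e_a,e_a) + \nabla^3 f(J\nabla f,e_a,Je_a) + 4n\,A(\nabla f,J\nabla f),
\end{equation*}
and the same antisymmetry as above converts the middle term into $-\nabla^3 f(J\nabla f,Je_a,e_a)$. Solving and substituting back, together with the symmetry $A(\nabla f,J\nabla f) = A(J\nabla f,\nabla f)$ from \eqref{tortrace}, yields the pointwise expression
\begin{equation*}
\nabla^2 f(\xi,J\nabla f) = \tfrac{1}{2n}\nabla^3 f(\nabla f,e_a,e_a) + A(J\nabla f,\nabla f) - \tfrac{1}{2n}P_f(\nabla f).
\end{equation*}

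Finally, I would integrate and handle the remaining cubic term. Since $\nabla^3 f(\nabla f,e_a,e_a)$ is the tensorial trace $(\nabla_{\nabla f}\nabla^2 f)(e_a,e_a) = -d(\triangle f)(\nabla f)$, one integration by parts via the horizontal divergence theorem applied to the one-form $\sigma(X) = \triangle f\cdot df(X)$ gives $\int_M d(\triangle f)(\nabla f)\,\Vol = \int_M(\triangle f)^2\,\Vol$. Substituting produces exactly \eqref{idcr2}. The only place where care is required is the sign bookkeeping in the two uses of the antisymmetry $\nabla^3 f(\cdot,e_a,Je_a) = -\nabla^3 f(\cdot,Je_a,e_a)$ and in extracting the Paneitz term from \eqref{e:Pdef1}; once these are correct, the argument reduces to a single divergence integration.
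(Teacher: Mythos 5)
Your argument is correct and follows essentially the same route as the paper: the pointwise formula comes from the (differentiated) first Ricci identity traced against $J$ together with the second Ricci identity $\nabla^2f(\xi,Z)=\nabla^2f(Z,\xi)-A(Z,\nabla f)$, and the integral identity then follows by setting $Z=J\nabla f$, extracting the Paneitz term from \eqref{e:Pdef1} via the re-indexing antisymmetry, and one integration by parts on $\triangle f\,df$. The only cosmetic difference is that you differentiate the traced relation \eqref{xi1} rather than tracing the differentiated first Ricci identity, which is the same computation.
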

\begin{proof}
We compute using the first two Ricci identities in \eqref{e:ricci identities}
\begin{multline*}
2\nabla ^{3}f(Z,Je_a,e_a)=\nabla ^{3}f(Z,Je_a,e_a)-\nabla ^{3}f(Z,e_a,Je_a)=-2\omega(Je_a,e_a)\nabla^2 f(Z,\xi)\\=4n\left (  \nabla^2f(\xi,Z) +A(Z,\gr)\right),
\end{multline*}
which proves the first formula.  The second identity \eqref{idcr2} follows from the first formula, the definition \eqref{e:Pdef1} of $P_f$ and an integration by parts.
\end{proof}
Combining \eqref{idcr2} with \eqref{2}, one gets
\begin{equation}\label{Aa}
2\int_M  A(J\nabla f,\nabla f)\Vol =\int_M\Big[ -\frac {1}{2n}g(\nabla^2 f,\omega)^2+\frac {1}{2n}(\triangle f)^2+\frac {1}{2n}P_f(\gr)\Big]\Vol.
\end{equation}
Integrating the Bochner type formula \eqref{bohh1}  on a compact $M$ gives
\begin{multline}  \label{bohin1}
0=\int_M\Big[-(\triangle f)^2+\left |(\nabla^2f)_{[1]}\right |^2 + \left
|(\nabla^2f)_{[-1]}\right |^2\\+Ric(\nabla f,\nabla f)+2A(J\nabla f,\nabla f)
+ 4\nabla^2f(\xi,J\nabla f)\Big] \Vol.
\end{multline}
We use Lemma \ref{grn3} to represent the last term, which turns the  identity \eqref{bohin1} into the following 
\begin{multline}  \label{e:bohin}
0=\int_M\Big[-(\triangle f)^2+\left |(\nabla^2f)_{[1]}\right |^2 + \left
|(\nabla^2f)_{[-1]}\right |^2+Ric(\nabla f,\nabla f)+6A(J\nabla f,\nabla f)\\
-\frac {2}{n}(\triangle f)^2 -\frac {2}{n}P_f(\gr)\Big]\Vol.
\end{multline}
After a substitution of \eqref{Aa}  in \eqref{e:bohin}, taking into account \eqref{hes10}, we get
\begin{multline}\label{e:bohin-best}
0=\int_M\Big[Ric(\nabla f,\nabla f)+4A(J\nabla f,\nabla f)-\frac {n+1}{n}(\triangle f)^2\Big]\Vol\\
+\int_M\Big[\left |(\nabla^2f)_{[1][0]}\right |^2+ \left
|(\nabla^2f)_{[-1]}\right |^2-\frac {3}{2n}P_f(\gr)\Big]\Vol.
\end{multline}
The above identities  \eqref{e:bohin} and \eqref{e:bohin-best} are  the key identities relating the CR-Paneitz operator and the Greenleaf's CR-Bochner formula \eqref{bohin1} on a compact manifold, and were  essentially proved in \cite{CC09a}. We presented these identities  in real notations from   \cite[(8.13), (8.15)]{IVO}.


\begin{thebibliography}{99}



\bibitem{Be80} Bedford, E., \emph{$(\partial\bar\partial)_b$ and the real
parts of CR functions.} Indiana Univ. Math. J. 29 (1980), no. 3, 333--340.

\bibitem{BF74} Bedford, E., \& Federbush, P., \emph{Pluriharmonic boundary
values.} Tohoku Math. J. (2) 26 (1974), 505--511.

\bibitem{BGM} Boeyr, Ch., Galizki, K., Matzeo, P., \emph{On Eta-Einstein Sasakian Geometry}, 
Commun. Math. Phys. 262 (2006), 177-208.

\bibitem{CC09a} Chang, S.-C., \& Chiu, H.-L., \emph{Nonnegativity of the CR
Paneitz operator and its application to the CR Obata's theorem.} J. Geom.
Anal. 19 (2009), 261--287.

\bibitem{CC09b} Chang, S.-C., \& Chiu, H.-L., \emph{On the CR analogue of Obata's theorem in a
pseudohermitian 3-manifold.} Math. Ann. 345 (2009), 33--51.





\bibitem{CM} Chanillo, S., Manfredi, J.J., \emph{ Sharp global bounds for the Hessian on pseudo-Hermitian manifolds},  
In: Recent developments in real and harmonic analysis. Appl. Numer. Harmon. Anal., pp. 159-172.
Birkh\"auser Boston, Inc., Boston (2010).

\bibitem{CSW}   Chen, J.-T., Saotome, T., \& Wu, C.-T., \emph{The CR Almost Schur lemma and Lee conjecture}, Kyoto J. Math., 52 (2012), no. 1, 89-98.



\bibitem{Chi06} Chiu, H.-L.,\emph{The sharp lower bound for the first
positive eigenvalue of the subLaplacian on a pseudohermitian 3-manifold.}
Ann. Global Anal. Geom. 30 (2006), no. 1, 81--96.

\bibitem{LT}De Lellis, C., \& Topping, P., \emph{Almost--Schur lemma}, Calc. Var. Partial Differ. Equations, 43 (2012), pp. 347--354.

\bibitem{DT} Dragomir, S., \& Tomassini, G., \emph{Differential geometry and
analysis on CR manifolds}, Progress in Math., vol. 246, Birkh\"{a}user Boston,
Inc., Boston, MA, 2006.




\bibitem{GL88} Graham, C.R., \& Lee, J.M., \emph{Smooth solutions of
degenerate Laplacians on strictly pseudoconvex domains.} Duke Math. J.,
57 (1988), 697--720.



\bibitem{Gr} Greenleaf, A., \emph{The first eigenvalue of a subLaplacian on
a pseudohermitian manifold.} Commun. Partial Diff. Equations, 10 (1985), no.
2, 191--217.

\bibitem{IV2} Ivanov, S., \& Vassilev, D., \emph{Extremals for the Sobolev
Inequality and the Quaternionic Contact Yamabe Problem}, Imperial College
Press Lecture Notes, World Scientific Publishing Co. Pte. Ltd., Hackensack,
NJ, 2011. xviii+219 pp. ISBN: 978-981-4295-70-3.

\bibitem{IVO} Ivanov, S., \& Vassilev, D., \emph{An Obata type result for the first eigenvalue of the sub-Laplacian
on a CR manifold with a divergence free torsion},  J. Geom., \emph{103},  3 (2012),
475-504. 



\bibitem{IVZ} Ivanov, S., Vassilev, D., \& Zamkovoy, S., \emph{Conformal
Paracontact curvature and the local flatness theorem}, Geom. Dedicata 144 (2010), 79--100.

\bibitem{L1} Lee, J., \emph{Pseudo-Einstein structures on CR manifolds},
Amer. J. Math., 110 ~(1988), 157--178.




\bibitem{Li} Lichnerowicz, A., \emph{G\'{e}om\'{e}trie des groupes de
transformations.} Travaux et Recherches Math\'ematiques, III. Dunod, Paris
1958.



\bibitem{LL} Li, S.-Y., \& Luk, H.-S., \emph{The sharp lower bound for the
first positive eigenvalue of a sub-Laplacian on a pseudo-Hermitian manifold.}
Proc. Amer. Math. Soc. 132 (2004), no. 3, 789--798.

\bibitem{Okum} Okumura, M., \emph{Some remarks on space with a certain contact structure}, Tohoku Math. J. (2) 14 (1962), 135-145.

\bibitem{T} Tanaka, N., \emph{A differential geometric study on strongly
pseudo-convex manifolds}, Lectures in Mathematics, Department of
Mathematics, Kyoto University, No. 9. Kinokuniya Book-Store Co., Ltd.,
Tokyo, 1975.

\bibitem{Tanno} Tanno,  S., \emph{The topology of contact Riemannian manifolds}, Illinois J. Math. 12 (1968), 700-717.

\bibitem{W} Webster, S. M., \emph{Real hypersurfaces in complex space},
Thesis, University of California, 1975.

\bibitem{W1} Webster, S. M., \emph{Pseudo-hermitian structures on a real
hypersurface}, J. Diff. Geom., 13 (1979), 25--41.






\end{thebibliography}
\end{document}